\newtheorem{theorem}{Theorem}[section]
\newtheorem{proposition}[theorem]{Proposition}
\newtheorem{corollary}[theorem]{Corollary}
\newtheorem{lemma}[theorem]{Lemma}
\newtheorem{remark}[theorem]{Remark}
\newtheorem{example}[theorem]{Example}
\newtheorem{question}[theorem]{Question}
\begin{document}
	
	\title{A characterization of the Arf property for quadratic quotients of the Rees algebra}
	
	\author{Alessio Borzì
			\thanks{Dipartimento di Matematica e Informatica, Università degli Studi di Catania.} \thanks{Scuola Superiore di Catania.}}
	
	\maketitle
	
	\begin{abstract}
		We provide a characterization of the Arf property in both the numerical duplication of a numerical semigroup and in a member of a family of quotients of the Rees algebra studied in \cite{barucci2015family}.
	\end{abstract}
	
	\section*{Introduction}
	
	Let $R$ be a Noetherian one-dimensional local domain, $I$ be an ideal of $R$ and $t$ be an indeterminate. Let $R[It] = \bigoplus_{n \in \mathbb{N}}I^nt^n$ be the Rees algebra associated with $R$ and $I$. In \cite{barucci2015family} the authors, looking for a unified approach to the Nagata's idealization and the amalgamated duplication of a ring (see \cite{d2007amalgamated}), studied the following family of quotients of the Rees algebra for every $a,b \in R$
	\[ R(I)_{a,b} = \frac{R[It]}{(t^2+at+b) \cap R[It]}, \]
	showing that the Nagata's idealization is obtained for $a=b=0$ and the amalgamated duplication for $a=1$, $b=0$. A remarkable fact about this family of rings is that we can always find domains among its members, if the original ring $R$ is itself a domain. In particular, it was shown in \cite{barucci2015family} that this ring construction can be connected to a semigroup construction called numerical duplication (see \cite{d2013numerical}). More precisely let $S$ be a numerical semigroup, $E$ be a semigroup ideal, and $m \in S$ be an odd integer. For any set of integers $A \subseteq \mathbb{Z}$ we set $2 \cdot A = \{ 2a : a \in A \}$. Then we define the numerical duplication $S \Join^m E$ of $S$ with respect to $E$ and $m$ as the numerical semigroup
	\[ S \Join^m E = 2 \cdot S \cup (2 \cdot E + m). \]
	Now, if we start with an algebroid branch $R$ and $b \in R$ with $v(b)$ odd, the member of the family of the type $R(I)_{0,-b}$ has its value semigroup equal to the numerical duplication of $v(R)$ with respect to $v(I)$ and $v(b)$.	In this paper we show that this is true in general for every Noetherian, one-dimensional, analytically irreducible, local domain $R$. 
	
	In \cite{arf1948interpretation} Arf solved the classification problem of singular branches, using their multiplicity sequence. Later, inspired by the work of Arf, Lipman in \cite{lipman1971stable} introduced the notions of Arf ring and Arf closure of a ring. These rings share the same multiplicity sequence. Hence the idea is to calculate the Arf closure of the coordinate ring of a curve and then its value semigroup, which is an Arf numerical semigroup, in order to obtain its multiplicity sequence. \\
	
	In this paper we provide a characterization of the Arf property in both the numerical duplication and the family of rings $\mathcal{R} = R(I)_{0,-b}$. More precisely, in Section \ref{section 1} we recall all the basic notions on numerical semigroups and Arf rings. In Section \ref{section 2}, we prove the characterization of the Arf property for the numerical duplication (Theorem \ref{characterization on ns}). In Section \ref{section 3}, we show that $\mathcal{R}$ is a Noetherian, one-dimensional, analytically irreducible, local domain and its value semigroup is $v(R) \Join^{v(b)}v(I)$ (Theorem \ref{double valuation}), then we prove a series of technical results for the purpose of proving Theorem \ref{characterization on rings}, that is the extension to $\mathcal{R}$ of the previous numerical characterization. \\
	
	Several computations are performed by using the GAP system \cite{gap2015gap} and, in
	particular, the NumericalSgps package \cite{delgadonumericalsgps}.
	
	\section{Preliminaries}\label{section 1}
	
	A numerical semigroup $S$ is an additive submonoid of $\mathbb{N}$ with finite complement in $\mathbb{N}$. The \emph{multiplicity} of $S$ is $\mu(S) = \min (S \setminus \{0\})$. The \emph{Frobenius number} of $S$ is $F(S) = \max(\mathbb{N} \setminus S)$ and the \emph{conductor} of $S$ is $c(S) = F(S)+1$. A semigroup ideal of $S$ is a subset $E \subseteq S$ such that $E+S \subseteq E$. We call $\tilde{e} = \min E$; then the \emph{integral closure} of $E$ in $S$ is the semigroup ideal $\overline{E} = \{ s \in S: s \geq \tilde{e} \}$, if $E = \overline{E}$ then $E$ is \emph{integrally closed}. We say that $E$ is \emph{stable} if $E+E = E+\tilde{e}$.
	
	An \emph{Arf numerical semigroup} is a numerical semigroup $S$ in which for every $x,y,z \in S$, such that $x \geq y \geq z$, it results $x+y-z \in S$; equivalently $S$ is Arf if and only if every integrally closed semigroup ideal is stable (see \cite[Theorem 2.2]{lipman1971stable}).
	
	Given an Arf numerical semigroup $S = \{ 0 = s_0 < s_1 < s_2 < \ldots \}$ the sequence $(e_0,e_1,e_2,\ldots)$, with $e_i = s_{i+1}-s_i$, is the \emph{multiplicity sequence} of $S$. Note that $e_0$ corresponds to the multiplicity of $S$.
	
	We call an \emph{Arf sequence} a non-increasing sequence of positive integers $(e_0,e_1,e_2,\ldots)$ such that
	\begin{enumerate}
		\item exists $n \in \mathbb{N}$ such that $e_k = 1$ for all $k \geq n$,
		\item for every $i \in \mathbb{N}$ exists $k \geq 1$ such that $e_i = \sum_{j=1}^k e_{i+j}$. 
	\end{enumerate}
	A sequence of positive integers $(e_0,e_1,e_2,\ldots)$ is an Arf sequence if and only if it is a multiplicity sequence of an Arf numerical semigroup, that is $S = \{ 0,e_0,e_0+e_1,e_0+e_1+e_2,\ldots \}$, see for instance \cite[Proposition 1]{garcia2017parametrizing}.
	
	From the Arf numerical semigroup $S$ we can construct a chain of Arf numerical semigroups $S_0 \subseteq S_1 \subseteq S_2 \subseteq \ldots$ with $S=S_0$ and $S_{i+1} = (S_i \setminus \{0\})-e_i$, namely the blow up of $S_i$. The multiplicity of $S_i$ is $e_i$ and $S_n = \mathbb{N}$ for $n$ large enough.
	
	Let $(R,\mathfrak{m})$ be a Noetherian one-dimensional local domain and $\overline{R}$ its integral closure in its field of fractions $Q(R)$.
	We assume that $R$ is \emph{analytically irreducible}, that is its completion $\hat{R}_{\mathfrak{m}}$ is a domain, or, equivalently, $\overline{R}$ is a discrete valuation ring (DVR) and a finitely generated $R$-module.
	Since the integral closure $\overline{R}$ is a DVR, every non zero element of $R$ has a value as an element of $\overline{R}$. The set of values $v(R)$ is a numerical semigroup. The multiplicity of $R$ is equal to the multiplicity of its value semigroup $\mu(R) = \mu(v(R))$.
	
	For any two $R$-submodules $E,F$ of $\overline{R}$ set
	\[ (E:F) = \{ x \in \overline{R}: xF \subseteq E \}. \]
	The blow up of $R$ is $L(R) = \bigcup_{n \in \mathbb{N}}(\mathfrak{m}^n : \mathfrak{m}^n)$. If we fix $R = R_0$ and $R_{i+1} = L(R_i)$ then the multiplicity sequence of $R$ is the sequence $(\mu(R_0),\mu(R_1),\ldots)$.
	
	We will also assume that $R$ is \emph{residually rational}, namely its residue field $k = R/\mathfrak{m}$ is isomorphic to the residue field of $\overline{R}$. With this assumption, for any $x,y \in R$ such that $v(x) = v(y) $ there exists an invertible element $u \in R$ such that $v(x-uy) > v(x) = v(y)$. Furthermore, for any fractional ideals $I,J$ of $R$ such that $J \subseteq I$, it results $\lambda(I/J) = |v(I) \setminus v(J)|$.
	
	An element $x \in R$ is said to be integral over the ideal $I$ if $x$ satisfies a relation
	\[ x^n + a_1x^{n-1} + \ldots + a_{n-1}x+a_n = 0 \]
	with $a_j \in I^j$ for $j = 1,2,\ldots,n$. The set $\overline{I}$ of all elements of $R$ which are integral over $I$ is an ideal of $R$ (see \cite{lipman1971stable} or \cite{huneke2006integral}), called the integral closure of $I$ in $R$.
	
	In our setting, the integral closure of an ideal $I$ is equal to
	\[ \overline{I} = I \overline{R} \cap R = \{ x \in R: v(x) \geq \min v(I) \}, \]
	(see \cite[Proposition 1.6.1, Proposition 6.8.1]{huneke2006integral}).
	If $I = \overline{I}$ then $I$ is \emph{integrally closed}. It follows that the ideal $I$ is integrally closed if and only if the semigroup ideal $v(I)$ is integrally closed. The necessity easily follows from the definitions. For the sufficiency we have $\lambda(\overline{I}/I) = |v(\overline{I}) \setminus v(I)| = 0$, then $I = \overline{I}$. Notice that here the assumption that $R$ is residually rational is needed.
	
	If $I$ and $J$ are two ideals of $R$, then either $\min v(I) \geq \min v(J)$ or $\min v(I) < \min v(J)$. If $I$ and $J$ are integrally closed, from the previous observations it follows that either $I \subseteq J$ or $J \subsetneq I$.
	
	The \emph{conductor} of $R$ is $C = (R:\overline{R})$. The conductor is an ideal both of $R$ and $\overline{R}$. The conductor of $v(R)$ is $\min v(C)$. Note that $\overline{R}C = C$, so $v(C) = v(\overline{R}C) = v(C)+\mathbb{N} = \min v(C) + \mathbb{N}$, therefore $C$ is integrally closed. In particular, if $J$ is an integrally closed ideal of $R$, then either $J \subseteq C$ or $C \subsetneq J$. We will frequently use this fact in Section \ref{section 3}.
	
	Let $x \in I$ be such that $v(x) = \min v(I)$. The ideal $I$ is \emph{stable} if $I^2 = xI$, or, equivalently, if $(I:I) = x^{-1}I$. The ring $R$ is an \emph{Arf ring} if every integrally closed ideal is stable. The ring $R$ is Arf if and only if its value semigroup $v(R)$ is an Arf numerical semigroup and the multiplicity sequence of $R$ coincides with the multiplicity sequence of $v(R)$ (see \cite[Theorem II.2.13]{barucci1997maximality}).
	
	\begin{proposition}\label{stable in conductor}
		Let $J$ be an integrally closed ideal of $R$. If $J \subseteq C$ then $J$ is stable.
	\end{proposition}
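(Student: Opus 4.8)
The plan is to verify the defining equation of stability directly. Fix $x \in J$ with $v(x) = \min v(J)$; by the definition recalled above, $J$ is stable precisely when $J^2 = xJ$. The inclusion $xJ \subseteq J^2$ is immediate from $x \in J$, so the entire content lies in the reverse inclusion $J^2 \subseteq xJ$. Since $xJ$ is an ideal, it is enough to show that every product $ab$ with $a,b \in J$ satisfies $ab \in xJ$, that is, $ab/x \in J$.

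Because $J$ is integrally closed, I would invoke the description $J = \{\, r \in R : v(r) \geq \min v(J) \,\}$ established in the preliminaries. Thus proving $ab/x \in J$ splits into two requirements: that $v(ab/x) \geq \min v(J)$, and that $ab/x$ genuinely lies in $R$. The first is a one-line valuation computation, namely $v(ab/x) = v(a) + v(b) - v(x) \geq \min v(J)$, using that both $v(a)$ and $v(b)$ are at least $\min v(J) = v(x)$.

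The second requirement is where the hypothesis $J \subseteq C$ enters, and it is the real heart of the argument. The element $a/x$ has value $v(a) - v(x) \geq 0$, so it lies in the DVR $\overline{R}$. On the other hand $b \in J \subseteq C = (R:\overline{R})$, whence $b\overline{R} \subseteq R$ by the very definition of the conductor. Combining these, $ab/x = b \cdot (a/x) \in b\overline{R} \subseteq R$. Together with the value estimate this yields $ab/x \in J$, hence $ab \in xJ$; therefore $J^2 = xJ$ and $J$ is stable.

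I do not anticipate a serious obstacle: once one writes $ab/x = b\cdot(a/x)$ and notes that $a/x \in \overline{R}$, the conductor condition does all the work. The only points requiring care are choosing $x$ so that $v(x) = \min v(J)$ and then re-entering $R$ via the integrally-closed description of $J$; both are legitimate under the standing hypotheses (analytic irreducibility and residual rationality) that guarantee $\overline{R}$ is a DVR and that integral closedness of $J$ is detected by $v(J)$.
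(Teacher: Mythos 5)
Your proof is correct and follows essentially the same route as the paper's: choose $x$ of minimum value, write $ab/x = b\cdot(a/x)$ with $a/x \in \overline{R}$, use $b \in J \subseteq C$ to land in $R$, and conclude $ab/x \in J$ by the valuation estimate together with integral closedness of $J$. The only difference is cosmetic --- you spell out explicitly the description $J = \{\, r \in R : v(r) \geq \min v(J) \,\}$ that the paper uses implicitly in its final step.
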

	\begin{proof}
		Let $x \in J$ such that $v(x) = \min v(J)$, we show that $J^2 = xJ$. Clearly $xJ \subseteq J^2$. Let $i,j \in J$ then
		\[ v(j) \geq v(x) \Rightarrow v(j)-v(x) = v(jx^{-1}) \geq 0 \Rightarrow jx^{-1} \in \overline{R}. \]
		Since $i \in J \subseteq C$, it results $i(jx^{-1}) \in R$, so
		\[ v(ijx^{-1}) = v(i) + v(jx^{-1}) \geq v(i) \geq v(x) \Rightarrow ijx^{-1} \in J, \]
		therefore $ij \in xJ$.
	\end{proof}
	
	\begin{proposition}\label{JC c xJ}
		Let $J$ be an integrally closed ideal of $R$. If $C \subseteq J$ and $x \in J$ is an element of minimum value in $J$ then $JC \subseteq xJ$.
	\end{proposition}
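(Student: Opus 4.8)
The plan is to mimic the argument of Proposition \ref{stable in conductor}, exploiting the fact that the conductor $C$ is an ideal of the whole integral closure $\overline{R}$, not merely of $R$. Since $JC$ is the set of finite sums of products $jc'$ with $j \in J$ and $c' \in C$, and $xJ$ is an ideal, it suffices to show that each such product lies in $xJ$, that is, that $jc'x^{-1} \in J$.

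First I would use the minimality of $v(x)$: since $x$ has minimum value in $J$, for every $j \in J$ we have $v(j) \geq v(x)$, hence $v(jx^{-1}) = v(j) - v(x) \geq 0$ and therefore $jx^{-1} \in \overline{R}$. This is exactly the first step in the proof of Proposition \ref{stable in conductor}.

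Next I would rewrite the product as $jc'x^{-1} = (jx^{-1})c'$ and invoke the key property $\overline{R}C = C$ recalled before Proposition \ref{stable in conductor}: because $C$ absorbs multiplication by elements of $\overline{R}$, and $jx^{-1} \in \overline{R}$ while $c' \in C$, we obtain $(jx^{-1})c' \in C$. Finally the hypothesis $C \subseteq J$ gives $jc'x^{-1} \in J$, i.e.\ $jc' \in xJ$; passing to finite sums then yields $JC \subseteq xJ$.

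The main point to get right, rather than a genuine obstacle, is to notice that the roles of $C$ and $J$ are swapped compared to Proposition \ref{stable in conductor}: there the element carrying the product into $R$ lay in $J \subseteq C$, whereas here it is the conductor element $c'$ that supplies membership in $C$ after absorbing the fractional factor $jx^{-1} \in \overline{R}$. In particular the argument never uses the integral closedness of $J$ itself; only the existence of a minimum-value element $x$, the inclusion $C \subseteq J$, and the fact that $C$ is an ideal of $\overline{R}$ are needed.
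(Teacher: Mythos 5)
Your proof is correct and is essentially the paper's own argument written out element-wise: the paper compresses exactly your three ingredients ($x^{-1}J \subseteq \overline{R}$ by minimality of $v(x)$, the absorption $\overline{R}C = C$, and $C \subseteq J$) into the single chain $JC = x(x^{-1}J)C \subseteq x\overline{R}C = xC \subseteq xJ$. Your closing observation that the integral closedness of $J$ is never actually used is also accurate; it is a hypothesis carried along for the context in which the proposition is applied.
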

	\begin{proof}
		Observe that $x^{-1}J \subseteq \overline{R}$, hence
		\[ JC = x(x^{-1}J)C \subseteq x\overline{R}C = xC \subseteq xJ. \qedhere \]
	\end{proof}
	
	\section{Arf property in the numerical duplication}\label{section 2}
	
	In this section $S = \{ 0 = s_0 < s_1 < s_2 < \ldots \}$ will be a numerical semigroup, $E$ a semigroup ideal of $S$ and $m \in S$ an odd integer. Recall that the quotient of $S$ by a positive integer $d$ is
	\[ \frac{S}{d} = \{ x \in \mathbb{N}: dx \in S \}. \]
	
	\begin{proposition}\label{Arf quotient}
		For every $d>0$, if $S$ is Arf so is $\frac{S}{d}$.
	\end{proposition}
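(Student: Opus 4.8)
The plan is to invoke the order-theoretic characterization of the Arf property directly: recall that $S$ is Arf precisely when $x+y-z \in S$ for all $x,y,z \in S$ with $x \geq y \geq z$. Since $\frac{S}{d}$ is defined purely by the membership condition $dx \in S$, I expect the quotient to interact cleanly with this characterization, because multiplication by the positive integer $d$ is strictly order-preserving and thus transports the defining inequality without distortion.

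First I would verify that $\frac{S}{d}$ is genuinely a numerical semigroup before using the Arf characterization. This is routine: if $x,y \in \frac{S}{d}$ then $dx, dy \in S$, so $d(x+y) = dx + dy \in S$, whence $x+y \in \frac{S}{d}$, and $0 \in \frac{S}{d}$ trivially; for the finite complement, once $dx \geq c(S)$ we have $dx \in S$, so every sufficiently large integer lies in $\frac{S}{d}$.

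The core step is then immediate. Take $x,y,z \in \frac{S}{d}$ with $x \geq y \geq z$. Multiplying through by $d > 0$ gives $dx \geq dy \geq dz$, and by definition of the quotient $dx, dy, dz \in S$. Applying the Arf property of $S$ to this ordered triple yields $dx + dy - dz \in S$, that is $d(x+y-z) \in S$, which is exactly the assertion that $x+y-z \in \frac{S}{d}$. Hence $\frac{S}{d}$ is Arf.

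I do not anticipate a real obstacle: the entire argument rests on the single fact that scaling by $d$ preserves the ordering, so the Arf inequality passes verbatim from $S$ to $\frac{S}{d}$. The only point deserving a moment's care is confirming that $\frac{S}{d}$ is a bona fide numerical semigroup before quoting the characterization, and that check is elementary.
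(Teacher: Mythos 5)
Your proof is correct and follows exactly the paper's argument: multiply the ordered triple by $d$, apply the Arf property in $S$, and observe that $dx+dy-dz = d(x+y-z)$ witnesses $x+y-z \in \frac{S}{d}$. The preliminary check that $\frac{S}{d}$ is a numerical semigroup is a welcome extra precaution, though the paper treats this as a known standard construction.
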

	\begin{proof}
		Let $x,y,z \in \frac{S}{d}$ with $x \geq y \geq z$, then we have $dx,dy,dz \in S$ with $dx \geq dy \geq dz$ and since $S$ is Arf it follows that
		\[ d(x+y-z) = dx+dy-dz \in S, \]
		hence $x+y-z \in \frac{S}{d}$.
	\end{proof}
	
	By definition of numerical duplication it is clear that $(S \Join^m E) / 2 = S$, hence we immediately get the following
	
	\begin{corollary}\label{Quotient duplication Arf}
		If $S \Join^m E$ is Arf so is $S$.
	\end{corollary}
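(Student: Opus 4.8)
The plan is to invoke Proposition \ref{Arf quotient} with $d = 2$, applied to the semigroup $S \Join^m E$, which by hypothesis is Arf. That proposition then yields immediately that the quotient $(S \Join^m E)/2$ is again Arf, so the whole corollary reduces to confirming the identity $(S \Join^m E)/2 = S$ already asserted in the surrounding text.

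To verify this identity I would simply unwind the definition of the quotient, namely $(S \Join^m E)/2 = \{ x \in \mathbb{N} : 2x \in S \Join^m E \}$. By definition of the numerical duplication, $2x$ lies in $S \Join^m E$ precisely when $2x \in 2 \cdot S$ or $2x \in 2 \cdot E + m$. Here the assumption that $m$ is odd does the essential work: every element of $2 \cdot E + m$ is odd, whereas $2x$ is even, so the second alternative cannot occur. Hence $2x \in 2 \cdot S$, that is $x \in S$; and conversely every $x \in S$ satisfies $2x \in 2 \cdot S \subseteq S \Join^m E$. This establishes the equality, and the corollary follows.

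I expect no genuine obstacle in this argument, since the statement is an immediate corollary and the text flags it as such. The only point deserving care is making the parity observation explicit: the oddness of $m$ is exactly what forces the even element $2x$ into the $2 \cdot S$ component of the duplication rather than the $2 \cdot E + m$ component, and it is this even/odd separation that makes the quotient recover $S$ precisely.
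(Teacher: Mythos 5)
Your proposal is correct and follows exactly the paper's route: the corollary is deduced from Proposition \ref{Arf quotient} with $d=2$ together with the identity $(S \Join^m E)/2 = S$, which the paper states as clear from the definition. Your explicit parity verification (oddness of $m$ forcing $2x$ into the $2\cdot S$ component) merely spells out the detail the paper leaves implicit, and it is accurate.
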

	
	\begin{lemma}\label{E integrally closed}
		If $S \Join^m E$ is Arf then $E$ is integrally closed.
	\end{lemma}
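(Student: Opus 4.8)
The plan is to prove the reverse inclusion $\overline{E} \subseteq E$, since $E \subseteq \overline{E}$ always holds. Writing $\tilde{e} = \min E$, this amounts to showing that every $s \in S$ with $s \geq \tilde{e}$ belongs to $E$. The bridge to the Arf hypothesis is the parity decomposition of $T := S \Join^m E = 2 \cdot S \cup (2 \cdot E + m)$: since $m$ is odd, the even elements of $T$ are exactly $2 \cdot S$ and the odd ones are exactly $2 \cdot E + m$. Hence $s \in E$ if and only if the odd integer $2s + m$ lies in $T$, so the whole problem reduces to proving $2s + m \in T$ for each such $s$.

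To exploit that $T$ is Arf, I would exhibit $2s + m$ as an Arf combination $x + y - z$ with $x \geq y \geq z$ and $x,y,z \in T$. The natural candidates are the three elements $2s$, $2\tilde{e} + m$, and $2\tilde{e}$, all of which lie in $T$ (the first and third in $2 \cdot S$ because $s, \tilde{e} \in S$, the second in $2 \cdot E + m$ because $\tilde{e} \in E$), and which satisfy $(2s) + (2\tilde{e} + m) - (2\tilde{e}) = 2s + m$. The element $2\tilde{e}$ is the smallest of the three, since $2\tilde{e} \leq 2s$ (as $s \geq \tilde{e}$) and $2\tilde{e} \leq 2\tilde{e} + m$ (as $m > 0$), so it will play the role of $z$.

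The one point that needs care --- and the only real obstacle --- is the monotonicity requirement $x \geq y$, because whether $2s$ or $2\tilde{e} + m$ is the larger of the two depends on the size of $s$. I would therefore split into two cases according to the sign of $2s - (2\tilde{e} + m)$, noting that these two integers have opposite parity and are thus never equal: if $2s \geq 2\tilde{e} + m$ take $x = 2s$, $y = 2\tilde{e} + m$, $z = 2\tilde{e}$, while if $2s < 2\tilde{e} + m$ take $x = 2\tilde{e} + m$, $y = 2s$, $z = 2\tilde{e}$. In either case $x \geq y \geq z$ holds with all three in $T$, so the Arf property of $T$ yields $2s + m = x + y - z \in T$. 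Finally I would close the argument by parity: $2s + m$ is odd and lies in $T$, hence in $2 \cdot E + m$, so $2s + m = 2e + m$ for some $e \in E$, giving $s = e \in E$. This proves $\overline{E} \subseteq E$, and therefore $E = \overline{E}$ is integrally closed.
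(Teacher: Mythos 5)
Your proof is correct and takes essentially the same route as the paper: both apply the Arf property to the triple $2s$, $2\tilde{e}+m$, $2\tilde{e}$ (the paper phrases it contrapositively, with consecutive elements $s_i \in E$, $s_{i+1} \notin E$ in place of $\tilde{e}$, $s$) and then conclude by parity that the odd element $2s+m$ must lie in $2 \cdot E + m$. Your explicit case split on whether $2s \geq 2\tilde{e}+m$ is in fact slightly more careful than the paper, which only checks that $2s_i$ is the minimum of the three --- sufficient anyway, since $x+y-z$ is symmetric in $x$ and $y$.
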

	\begin{proof}
		Suppose by contradiction that $E$ is not integrally closed. Then there exists $i \in \mathbb{N}$ such that $s_i \in E$ and $s_{i+1} \notin E$. Consider $2s_{i+1},2s_i+m, 2s_i \in S \Join^m E$, since $2s_{i+1} \geq 2s_i$, $2s_i+m \geq 2s_i$ and $S \Join^m E$ is Arf, we have
		\[ 2s_{i+1}+2s_i+m - 2s_i = 2s_{i+1}+m \in S \Join^m E, \]
		which means $s_{i+1} \in E$, contradiction.
	\end{proof}
	
	Let $S$ be an Arf numerical semigroup, and let $(e_0,e_1,e_2,\ldots)$ be its multiplicity sequence. Fix $n \in \mathbb{N}$ to be the smallest integer such that $e_k = 1$ for every $k \geq n$. We recall that $e_0 = \min S \setminus \{0\} = s_1$, and that $s_ {i+1} = e_0+\ldots+e_i$, in particular $s_{n+1} = s_n + 1$ and $s_n = c(S)$ is the conductor of $S$.
	
	In the proof of the following result we will use the fact that for an Arf numerical semigroup $S$, if $x,x+1 \in S$ then $x+\mathbb{N} \subseteq S$ (see for instance \cite[Lemma 11]{rosales2004arf}).
	
	\begin{theorem}\label{characterization on ns}
		The numerical duplication $D = S \Join^m E$ is Arf if and only if $S$ is Arf with multiplicity sequence $(e_0,e_1,\ldots,e_{n-1},1,1,\ldots)$, $e_{n-1} \neq 1$, $E$ is integrally closed and, if $\min(E) < c(S)$, $e_0 = e_1 = \ldots = e_{n-1} = m$.
	\end{theorem}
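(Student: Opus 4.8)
The plan is to prove both implications separately, exploiting the structure $D = 2\cdot S \cup (2\cdot E + m)$ and the already-established results (Corollary~\ref{Quotient duplication Arf} and Lemma~\ref{E integrally closed}). For the forward direction I would assume $D$ is Arf and extract each of the listed conditions; for the backward direction I would assume all four conditions and verify the Arf property $x+y-z \in D$ for arbitrary $x \geq y \geq z$ in $D$ by a case analysis on the parities of $x,y,z$. The key difficulty is the condition $e_0 = \cdots = e_{n-1} = m$ under the hypothesis $\min(E) < c(S)$: this couples the multiplicity sequence of $S$ to the shift $m$, and it is the step where the odd elements $2\cdot E + m$ interact nontrivially with the even elements $2\cdot S$.

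\textbf{Forward direction.}
First I would apply Corollary~\ref{Quotient duplication Arf} to get that $S$ is Arf, and Lemma~\ref{E integrally closed} to get that $E$ is integrally closed; these are essentially free. The condition $e_{n-1} \neq 1$ is a normalization choice forced by taking $n$ minimal with $e_k = 1$ for $k \geq n$, so it needs only a remark. The substantive part is showing that $\min(E) < c(S)$ forces $e_0 = \cdots = e_{n-1} = m$. Here I would argue as follows: if $\tilde e = \min(E) < c(S) = s_n$, then $2\tilde e + m$ is an odd element of $D$ lying below the even element $2s_n = 2c(S)$. I would feed well-chosen triples involving $2\tilde e + m$ and even elements $2 s_i$ into the Arf relation $x+y-z \in D$; the outputs, being even or odd, land in $2\cdot S$ or $2\cdot E + m$, and reading off membership translates into relations among the $s_i$. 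I expect that comparing $(2s_{i+1} + m) + (2\tilde e + m) - (2\tilde e + m)$-type combinations with the parity constraints will force each gap $e_i = s_{i+1} - s_i$ to equal $m$ for $i < n$, since $m$ is precisely the ``odd offset'' that must be reproducible inside $D$.

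\textbf{Backward direction.}
Assume $S$ Arf, $E$ integrally closed, the multiplicity sequence as stated, and (if $\min(E) < c(S)$) all $e_i = m$ for $i < n$. Take $x \geq y \geq z$ in $D$ and split on parities. When $x,y,z$ are all even, write $x = 2a$, etc., with $a \geq b \geq d$ in $S$, and $x+y-z = 2(a+b-d) \in 2\cdot S \subseteq D$ by Arfness of $S$. The remaining cases mix even elements of $2\cdot S$ with odd elements of $2\cdot E + m$; in each the target $x+y-z$ has a determined parity, and I would reduce membership to an Arf-type inequality in $S$ together with the semigroup-ideal property $E + S \subseteq E$ and integral closedness of $E$. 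The case that genuinely uses $e_0 = \cdots = e_{n-1} = m$ is the one producing an odd output from a combination dipping below $c(S)$: I would use Lemma~\cite[Lemma 11]{rosales2004arf} (if $x,x+1 \in S$ then $x + \mathbb{N} \subseteq S$) together with the equal-gap condition to certify that the relevant odd value lies in $2\cdot E + m$. The main obstacle throughout is bookkeeping the parity cases cleanly and pinning down exactly where the hypothesis $\min(E) < c(S)$ versus $\min(E) \geq c(S)$ changes the argument, since when $\min(E) \geq c(S)$ the odd elements all sit above the conductor and the equal-gap condition becomes unnecessary.
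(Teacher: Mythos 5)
Your forward direction has a genuine gap at exactly the step you flag as substantive. The one concrete combination you propose, $(2s_{i+1}+m)+(2\tilde e+m)-(2\tilde e+m)$, telescopes to $2s_{i+1}+m$ and yields no information, and ``I expect that \dots will force each gap $e_i=m$'' is not an argument. In fact no family of Arf triples can force each $e_i=m$ individually: Arf combinations only produce \emph{upper} bounds on $m$. The paper's mechanism is different and has three indispensable ingredients you are missing. First one must rule out $m\geq 2e_{n-1}$: if $2s_{n-1}+m\geq 2s_n$, then $x=2s_{n-1}+m$ and $x+1$ (even, above $2c(S)$, hence in $2\cdot S$) both lie in $D$, so $x+\mathbb{N}\subseteq D$ by \cite[Lemma 11]{rosales2004arf}; then $x+2\in D$ gives $s_{n-1}+1\in S$, i.e.\ $e_{n-1}=1$, a contradiction. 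Second, with $2s_{n-1}+m<2s_n$ one applies Arf to get $2s_n+2s_n-(2s_{n-1}+m)=2s_n+2e_{n-1}-m\in D$; this element is \emph{odd}, hence equals $2e+m$ with $e\in E\subseteq S$ and $e>s_{n-1}$, so $e\geq s_n$, which yields only $m\leq e_{n-1}\leq e_0$. Third --- and this never appears in your sketch --- one uses that $m$ is a nonzero element of $S$, so $m\geq e_0=\mu(S)$; the squeeze $m\leq e_{n-1}\leq\cdots\leq e_0\leq m$ then gives all the equalities at once. Without the hypothesis $m\in S$ entering explicitly, your plan cannot close, since the Arf relations alone are consistent with $m<e_0$.

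Your backward direction (direct verification of $x+y-z\in D$ by parity cases) is a legitimately different route from the paper's, but as written it also stops short of the actual work. The easy cases (all even; all odd; and the mixed cases where the combination stays above the conductor or where the Arf inequality in $S$ applies directly) go through as you say, but the hard cases are precisely those dipping below $c(D)$, e.g.\ $2s_1+2s_2-(2e+m)$ or $(2e_1+m)+(2e_2+m)-2s$, where one must certify membership of a shifted value in $2\cdot E+m$ or $2\cdot S$; there the equal-gap hypothesis must be converted into explicit arithmetic, and your proposal only gestures at this. The paper sidesteps the casework entirely: under the hypotheses one has $S=e_0\mathbb{N}\cup(ne_0+\mathbb{N})$ and $E=\{ie_0,\ldots,(n-1)e_0\}\cup(ne_0+\mathbb{N})$, so the gap sequence of $D$ can be computed in closed form, namely $2e_0$ repeated $i$ times, then $e_0$ repeated $2(n-i)$ times, then $2$ repeated $\frac{e_0-1}{2}$ times, then $1$'s, and one checks this is an Arf sequence via the criterion recalled in Section \ref{section 1}. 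If you want to keep your case analysis, you should write out the three below-conductor cases in full using the explicit form of $S$ and $E$; otherwise the paper's sequence computation is the shorter path.
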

	
	\begin{proof}
		\emph{Necessity}. From Corollary \ref{Quotient duplication Arf} and Lemma \ref{E integrally closed}, $S$ is Arf and $E$ is integrally closed.
		Now if $\min(E) < c(S) = s_n$ then $s_{n-1} \in E$. Suppose that $m \geq 2e_{n-1} = 2(s_n - s_{n-1})$, then
		\[ 2s_{n-1} + m \geq 2s_n. \]
		Since $2s_{n-1}+m+1$ is even and $s_n$ is the conductor of $S$ we obtain $2s_{n-1}+m+1 = 2s_k$ for some $k \in \mathbb{N}$. Setting $x = 2s_{n-1}+m$ we have $x,x+1 \in D$ which is Arf, hence $x+\mathbb{N} \subseteq D$, and so $x+2 \in D$; this means
		\begin{gather*}
		2s_{n-1}+m+2 = 2(s_{n-1}+1)+m \in D \Rightarrow \\
		\Rightarrow s_{n-1}+1 \in S \Rightarrow s_n = s_{n-1}+1 \Rightarrow \\
		\Rightarrow e_{n-1} = s_n-s_{n-1} = 1,
		\end{gather*}
		which is a contradiction. Therefore $m < 2e_{n-1}$, hence $2s_{n-1} + m < 2s_{n}$, since $D$ is Arf, this implies
		\[ 2s_n+2s_n-(2s_{n-1}+m) = 2s_n + 2e_{n-1}-m \in D. \]
		Furthermore $2s_n + 2e_{n-1}-m$ is odd and
		\[ 2s_n + 2e_{n-1}-m \geq 2s_{n} > 2s_{n-1}+m. \]
		It follows that
		\begin{gather*}
			2s_n + 2e_{n-1}-m \geq 2s_n+m \\
			\Rightarrow m \leq e_{n-1} \leq e_0.
		\end{gather*}
		Since $m \in S$ and it is odd, we must have $m = e_0 = e_1 = \ldots = e_{n-1}$. \\
		\emph{Sufficiency}. If $\min(E) \geq s_n$, then $E = x+\mathbb{N}$ with $x \geq s_n$, so it results $D = 2S \cup ((2x+m)+\mathbb{N})$ and it is easy to check that $D$ is Arf. \\
		Otherwise if $\min(E) < s_n$ and $m=e_0=e_1=\ldots=e_{n-1}$, then $S = e_0\mathbb{N} \cup (ne_0 + \mathbb{N})$ and $E = \{ ie_0,(i+1)e_0,\ldots,(n-1)e_0 \} \cup (ne_0+\mathbb{N})$ for some $i \leq n$. Hence, if $D = \{ 0 = d_0 < d_1 < \ldots < d_k < \ldots \}$ then after some easy calculations it results
		\[ ( d_{k+1}-d_k : k \in \mathbb{N} ) = ( \underbrace{2e_0,2e_0,\ldots,2e_0}_{i\text{ times}}, \underbrace{e_0,e_0,\ldots,e_0}_{2(n-i) \text{ times}}, \underbrace{2,2,\ldots,2}_{\frac{e_0-1}{2} \text{ times}},1,\ldots ), \]
		which is an Arf sequence, so $D$ is Arf.
	\end{proof}
	
	\begin{example}
		Let $S = \langle 3,7,8 \rangle = \{ 0,3,6,\rightarrow \}$, $S$ is Arf and its multiplicity sequence is $(3,3,1,\ldots)$, so $n = 2$. Let $E=S \setminus \{0\}$ and $m = 3$, $E$ is integrally closed, $\min(E) = 3 < 6 = s_n$ and $m = e_0 = e_1$. The numerical duplication is
		\[ S \Join^m E = \langle 6,9,14,16,17,19 \rangle = \{ 0,6,9,12,14,\rightarrow \}, \]
		and it is an Arf numerical semigroup.
	\end{example}
	
	\begin{remark}\label{remark multiples of m}
		In the case $\min(E) < c(S)$ of Theorem \ref{characterization on ns}, the elements of $D= S \Join^m E$ smaller than the conductor of $D$ are multiples of $m$, so they are of the form $k m$ for some $k \in \mathbb{N}$.
	\end{remark}
	
	Recall that the Arf closure $\operatorname{Arf}(S)$ of a numerical semigroup $S$ is the smallest Arf numerical semigroup that contains $S$ (see \cite{rosales2004arf}). Let $\tilde{E}$ be the integral closure in $\operatorname{Arf}(S)$ of the ideal generated by $E$ in $\operatorname{Arf}(S)$. More explicitly, if $\tilde{e} = \min E$, then $\tilde{E} = \{ s \in \operatorname{Arf}(S) : s \geq \tilde{e} \}$.
	
	\begin{proposition}\label{Arf S dup int E}
		With the notation introduced above we have
		\[ \operatorname{Arf}(S) \Join^m \tilde{E} \subseteq \operatorname{Arf}(S \Join^m E). \]
	\end{proposition}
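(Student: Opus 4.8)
The plan is to write $A = \operatorname{Arf}(S \Join^m E)$ and prove the inclusion by showing separately that the two pieces making up $\operatorname{Arf}(S) \Join^m \tilde E = 2\cdot\operatorname{Arf}(S) \cup (2\cdot\tilde E + m)$ are both contained in $A$. Throughout I would exploit only that $A$ is Arf and that it contains $D = S\Join^m E = 2\cdot S \cup (2\cdot E + m)$, together with the minimality that defines the Arf closure.

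First I would handle the even part $2\cdot\operatorname{Arf}(S) \subseteq A$. Since $A$ is Arf, Proposition \ref{Arf quotient} (with $d=2$) guarantees that $A/2 = \{x \in \mathbb{N} : 2x \in A\}$ is again an Arf numerical semigroup. From $2\cdot S \subseteq D \subseteq A$ I get $S \subseteq A/2$, so $A/2$ is an Arf numerical semigroup containing $S$; because $\operatorname{Arf}(S)$ is by definition the smallest such semigroup, it follows that $\operatorname{Arf}(S) \subseteq A/2$, which is exactly $2\cdot\operatorname{Arf}(S) \subseteq A$.

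Next I would treat the odd part $2\cdot\tilde E + m \subseteq A$. Fix $s \in \tilde E$, i.e. $s \in \operatorname{Arf}(S)$ with $s \geq \tilde e$, where $\tilde e = \min E$; the goal is $2s + m \in A$. By the even part $2s \in A$, while $\tilde e \in E$ gives $2\tilde e + m \in 2\cdot E + m \subseteq A$ and $2\tilde e \in 2\cdot S \subseteq A$. Now $2\tilde e$ is the smallest of the three elements $2\tilde e,\ 2s,\ 2\tilde e + m$ of $A$ (using $s \geq \tilde e$ and $m > 0$), and regardless of whether $2s \geq 2\tilde e + m$ or $2s < 2\tilde e + m$, applying the Arf identity $x + y - z \in A$ with $z = 2\tilde e$ and $\{x,y\} = \{2s,\ 2\tilde e + m\}$ yields $2s + m \in A$. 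Combining with the even part proves the stated inclusion.

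The argument reduces to a short computation once the three semigroup elements $2\tilde e$, $2s$, $2\tilde e + m$ are identified, and the only point requiring care is that the relative order of $2s$ and $2\tilde e + m$ is not fixed, so I would explicitly note that either ordering feeds into the Arf identity to give the same output $2s + m$. Beyond this there is no serious obstacle: that $A/2$ is a numerical semigroup is immediate, and $\tilde E$ being an integrally closed semigroup ideal of $\operatorname{Arf}(S)$ makes the left-hand duplication well defined.
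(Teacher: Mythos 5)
Your proposal is correct and follows essentially the same route as the paper: the even part via Proposition \ref{Arf quotient} and the minimality of $\operatorname{Arf}(S)$, and the odd part by applying the Arf identity to the three elements $2\tilde e$, $2e$, $2\tilde e + m$ with $2\tilde e$ as the smallest. Your explicit remark about the two possible orderings of $2s$ and $2\tilde e + m$ is a minor point the paper handles implicitly (since $x+y-z$ is symmetric in $x$ and $y$), but the argument is the same.
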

	\begin{proof}
		Since $S = (S \Join^m E)/2 \subseteq \operatorname{Arf}(S \Join^m E)/2$ and, by Proposition \ref{Arf quotient}, $\operatorname{Arf}(S \Join^m E)/2$ is Arf, we got
		\[ \operatorname{Arf}(S) \subseteq \frac{\operatorname{Arf}(S \Join^m E)}{2}; \]
		it follows that $2 \cdot \operatorname{Arf}(S) \subseteq \operatorname{Arf}(S \Join^m E)$. Now if $e \in \tilde{E}$, then $2e \geq 2\tilde{e}$ and $2\tilde{e} + m \geq 2\tilde{e}$, and, since $\operatorname{Arf}(S \Join^m E)$ is Arf and $2e,2\tilde{e},2\tilde{e}+m \in \operatorname{Arf}(S \Join^m E)$, we have
		\[ 2e + 2\tilde{e}+m - 2\tilde{e} = 2e+m \in \operatorname{Arf}(S \Join^m E). \]
		So, $2\tilde{E}+m \subseteq \operatorname{Arf}(S \Join^m E)$ and therefore $\operatorname{Arf}(S) \Join^m \tilde{E} \subseteq \operatorname{Arf}(S \Join^m E)$.
	\end{proof}
	
	Theorem \ref{characterization on ns} gives us sufficient conditions so that the inclusion of Proposition \ref{Arf S dup int E} is an equality. Recall that $\overline{E}$ is the integral closure in $S$ of the semigroup ideal $E$.
	
	\begin{corollary}
		If one of the following conditions holds
		\begin{enumerate}
			\item $\min(E) \geq c(S)$,
			\item $S$ is Arf and $m = e_0 = e_1 = \ldots = e_{n-1}$,
		\end{enumerate}
		then $\overline{E} = \tilde{E}$ and $\operatorname{Arf}(S) \Join^m \overline{E} = \operatorname{Arf}(S \Join^m E)$.
	\end{corollary}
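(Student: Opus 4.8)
The plan is to first establish the auxiliary equality $\overline{E} = \tilde{E}$, then to combine the inclusion already supplied by Proposition \ref{Arf S dup int E} with a minimality argument for the reverse inclusion, the latter resting on showing that $\operatorname{Arf}(S) \Join^m \overline{E}$ is itself an Arf numerical semigroup via the criterion of Theorem \ref{characterization on ns}.

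First I would verify $\overline{E} = \tilde{E}$. Writing $\tilde{e} = \min E$, we have $\overline{E} = \{ s \in S : s \geq \tilde{e} \}$ and $\tilde{E} = \{ s \in \operatorname{Arf}(S) : s \geq \tilde{e} \}$, so the inclusion $\overline{E} \subseteq \tilde{E}$ is automatic and only the reverse needs attention. Under condition (2), $S$ is Arf, so $\operatorname{Arf}(S) = S$ and the two sets coincide by definition. Under condition (1), $\tilde{e} \geq c(S)$, so every integer $\geq \tilde{e}$ already lies in $S$; since $\tilde{E}$ consists of those elements of $\operatorname{Arf}(S)$ that are $\geq \tilde{e}$, all of which are such integers, we obtain $\tilde{E} \subseteq \overline{E}$. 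Hence $\overline{E} = \tilde{E}$ in both cases, and in particular $\overline{E}$ is a semigroup ideal of $\operatorname{Arf}(S)$, so the duplication $\operatorname{Arf}(S) \Join^m \overline{E}$ is well-defined. With this, Proposition \ref{Arf S dup int E} immediately yields $\operatorname{Arf}(S) \Join^m \overline{E} \subseteq \operatorname{Arf}(S \Join^m E)$.

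For the reverse inclusion I would argue by minimality of the Arf closure: it suffices to show that $\operatorname{Arf}(S) \Join^m \overline{E}$ is Arf and contains $S \Join^m E$. The containment is clear, since $S \subseteq \operatorname{Arf}(S)$ and $E \subseteq \overline{E}$ give $2 \cdot S \cup (2 \cdot E + m) \subseteq 2 \cdot \operatorname{Arf}(S) \cup (2 \cdot \overline{E} + m)$. To see that it is Arf, set $T = \operatorname{Arf}(S)$ and apply Theorem \ref{characterization on ns} to $T \Join^m \overline{E}$: the semigroup $T$ is Arf, and $\overline{E} = \tilde{E}$ is integrally closed in $T$ by its very definition. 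Under condition (2) we have $T = S$ and $\min(\overline{E}) = \tilde{e}$, so the remaining hypothesis of Theorem \ref{characterization on ns}, namely that $m = e_0 = \cdots = e_{n-1}$ whenever $\min(\overline{E}) < c(S)$, is exactly our assumption. Under condition (1), $\min(\overline{E}) = \tilde{e} \geq c(S) \geq c(T)$, and hence the case $\min(\overline{E}) < c(T)$ simply does not occur, so no multiplicity condition is needed. In either case Theorem \ref{characterization on ns} certifies that $T \Join^m \overline{E}$ is Arf, and minimality of $\operatorname{Arf}(S \Join^m E)$ then forces $\operatorname{Arf}(S \Join^m E) \subseteq \operatorname{Arf}(S) \Join^m \overline{E}$, completing the equality.

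The step I expect to be the most delicate is the conductor comparison in condition (1): one must know that passing to the Arf closure cannot increase the conductor, i.e. $c(\operatorname{Arf}(S)) \leq c(S)$ because $T = \operatorname{Arf}(S) \supseteq S$, so that $\tilde{e} \geq c(S)$ indeed places $\min(\overline{E})$ at or beyond $c(T)$ and the multiplicity clause of Theorem \ref{characterization on ns} becomes vacuous. Everything else is a matter of unwinding the definitions of $\overline{E}$, $\tilde{E}$, and the Arf closure, together with the if-and-only-if criterion already established.
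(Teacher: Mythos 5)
Your proposal is correct and follows exactly the route the paper intends: the corollary is stated without proof, justified only by the remark that Theorem \ref{characterization on ns} supplies sufficient conditions making the inclusion of Proposition \ref{Arf S dup int E} an equality, and your argument (establishing $\overline{E} = \tilde{E}$ in both cases, certifying $\operatorname{Arf}(S) \Join^m \overline{E}$ is Arf via the sufficiency direction of Theorem \ref{characterization on ns}, then invoking minimality of the Arf closure) is precisely the natural fleshing-out of that plan. Your handling of the one delicate point, namely that $S \subseteq \operatorname{Arf}(S)$ forces $c(\operatorname{Arf}(S)) \leq c(S)$ so the multiplicity clause is vacuous in case (1), is also correct.
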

	
	It is easy to see that the previous equality is not true in the general case. In particular, the following example shows that neither the equality
	\[ \operatorname{Arf}(S) = \frac{\operatorname{Arf}(S \Join^m E)}{2} \]
	holds true in general.
	
	\begin{example}
		Let $S= \langle 5,8,11,12,14\rangle = \{ 0,5,8,10,\rightarrow \}$, $E = S \setminus \{0\}$ and $m = 5$. Note that $S$ is Arf, so $S = \operatorname{Arf}(S)$. The numerical duplication of $S$ with respect to $E$ and $m$ is
		\[ S \Join^m E = \{ 0,10,15,16,20,21,22,24,\rightarrow \}. \]
		Since $15,16 \in S \Join^m E$, its Arf closure is $\operatorname{Arf}(S \Join^m E) = \{ 0,10,15\rightarrow \}$; moreover $9 \in \operatorname{Arf}(S \Join^m E)/2$, but $9 \notin \operatorname{Arf}(S) = S$.
	\end{example}
	
	A couple of questions naturally arise.
	
	\begin{question}
		Are there any sufficient and necessary conditions so that the inclusion of Proposition \ref{Arf S dup int E} is an equality? Is there a way to express $\operatorname{Arf}(S \Join^m E)$ in terms of $S$, $E$ and $m$?
	\end{question}
	
	\section{Arf property in $\mathcal{R}$}\label{section 3}
	
	In this section $(R,\mathfrak{m})$ will be a Noetherian, analytically irreducible, residually rational, one-dimensional, local domain with $\operatorname{char}(R) \neq 2$; $v:Q(R) \rightarrow \mathbb{Z}$ will denote the valuation on $Q(R)$ associated to $\overline{R}$. Let $I$ be an ideal of $R$ and let $t$ be an indeterminate; the Rees algebra (also called Blow-up algebra) associated with $R$ and $I$ is the graded subring of $R[t]$ defined as
	\[ R[It] = \bigoplus_{n \in \mathbb{N}}I^nt^n. \]
	Let $b \in R$ such that $v(b) = m$ is odd, we define
	\[ \mathcal{R} = \frac{R[It]}{(t^2-b) \cap R[It]}. \]
	Then $\mathcal{R}$ is a subring of $R[\alpha]$ with $\alpha = t+(t^2-b)$. Furthermore $\mathcal{R}$ and $R[\alpha]$ have the same integral closure $\overline{\mathcal{R}}$ and the same field of fractions $Q(R)[\alpha]$ (see \cite[Corollary 1.8]{barucci2015family}). Since $\alpha$ is integral over $R$, the integral closure $\overline{R}^{Q(R)[\alpha]}$ of $R$ in $Q(R)[\alpha]$ is the same as the integral closure of $\mathcal{R}$. The extension field $Q(R) \subseteq Q(R)[\alpha]$ is finite and since $\operatorname{char}(R) \neq 2$, it is also separable.
	
	\begin{theorem}\label{double valuation}
		The ring $\mathcal{R}$ is a Noetherian one-dimensional local domain analytically irreducible and residually rational. If $v':Q(R)[\alpha] \rightarrow \mathbb{Z}$ is the extension on $Q(R)[\alpha]$ of the valuation of $\overline{\mathcal{R}}$, then $v'_{|Q(R)} = 2v$, $v'(\alpha)=m$ and
		\[ v'(\mathcal{R}) = v(R) \Join^{v(b)} v(I). \]
	\end{theorem}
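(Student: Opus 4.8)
The plan is to reduce everything to understanding a single valuation on $L := Q(R)[\alpha]$ together with an explicit description of $\mathcal{R}$ as an $R$-module. First I would record that $b$ is not a square in $Q(R)$: if $b = c^2$ with $c \in Q(R)$ then $v(b) = 2v(c)$ would be even, contradicting that $m = v(b)$ is odd; hence $[L : Q(R)] = 2$ and, in particular, $R[\alpha] = R \oplus R\alpha$ is a domain (a subring of the field $L$), so the subring $\mathcal{R}$ is a domain. Next I would identify $\mathcal{R}$ inside $R[\alpha]$. Writing a homogeneous element $\sum_n c_n t^n \in R[It]$ with $c_n \in I^n$ and reducing modulo $t^2 = b$, the even part $\sum_k c_{2k} b^k$ lies in $R$ and the coefficient $\sum_k c_{2k+1} b^k$ of $\alpha$ lies in $I$, since $I^{2k+1} b^k = I\,(I^2 b)^k \subseteq I$; conversely $R = I^0 t^0$ and $I\alpha$ (the image of $It$) are clearly in $\mathcal{R}$. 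Thus
\[ \mathcal{R} = R + I\alpha = \{ r + s\alpha : r \in R,\ s \in I \}. \]

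The crux --- and the step I expect to be the main obstacle --- is the valuation theory on $L/Q(R)$. By transitivity of integral closure and the identification recalled above, $\overline{\mathcal{R}}$ is the integral closure of the DVR $\overline{R}$ in the degree-$2$ separable extension $L$; this is a semilocal Dedekind domain, module-finite over $\overline{R}$ precisely because the extension is separable, and the fundamental identity gives $\sum_i e_i f_i = 2$. Here the oddness of $m$ is decisive: for any extension $w$ of $v$ to $L$ with ramification index $e$ one has $2 w(\alpha) = w(b) = e\,v(b) = em$, which forces $em$ to be even, hence $e$ even. Therefore $e = 2$, there is a unique prime above the maximal ideal of $\overline{R}$ (with $f = 1$), and $\overline{\mathcal{R}}$ is a DVR. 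Normalizing the associated valuation $v'$ to be surjective onto $\mathbb{Z}$ yields $v'_{|Q(R)} = 2v$ and, from $2v'(\alpha) = v'(b) = 2m$, also $v'(\alpha) = m$.

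From this I would read off the remaining ring-theoretic claims. Since $\mathcal{R} \subseteq R[\alpha]$ is an $R$-submodule of a module-finite $R$-algebra over the Noetherian ring $R$, it is itself module-finite, hence integral over $R$, giving $\dim \mathcal{R} = \dim R = 1$; being a quotient of the Noetherian Rees algebra $R[It]$, $\mathcal{R}$ is Noetherian. The extension $\mathcal{R} \subseteq \overline{\mathcal{R}}$ is integral and $\overline{\mathcal{R}}$ is local, so by lying over every maximal ideal of $\mathcal{R}$ is the contraction of the unique maximal ideal of $\overline{\mathcal{R}}$, whence $\mathcal{R}$ is local; together with the module-finiteness of $\overline{\mathcal{R}}$ over $\mathcal{R}$ this is exactly the stated analytic irreducibility. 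Finally, $f = 1$ means the residue field of $\overline{\mathcal{R}}$ equals that of $\overline{R}$, which equals $k$ by residual rationality of $R$; sandwiching $R \subseteq \mathcal{R} \subseteq \overline{\mathcal{R}}$ then forces $\mathcal{R}/\mathfrak{n} = k$, so $\mathcal{R}$ is residually rational.

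The value computation follows cleanly. For a nonzero $r + s\alpha \in \mathcal{R}$ with $r \in R$ and $s \in I$, we have $v'(r) = 2v(r)$ even and $v'(s\alpha) = 2v(s) + m$ odd, so these are never equal and $v'(r + s\alpha) = \min(2v(r),\, 2v(s)+m)$ with no cancellation. Hence every value is either an even number $2v(r) \in 2\cdot v(R)$ (attained with $s = 0$) or an odd number $2v(s) + m \in 2\cdot v(I) + m$ (attained with $r = 0$), and conversely all such numbers occur. Therefore $v'(\mathcal{R}) = 2\cdot v(R) \cup (2\cdot v(I) + m) = v(R) \Join^{v(b)} v(I)$, as claimed.
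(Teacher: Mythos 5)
Your proposal is correct, and its core is a genuinely different argument from the paper's. For the decisive step --- the structure of $\overline{\mathcal{R}}$ and the valuation --- you argue top-down via ramification theory: $\overline{\mathcal{R}}$ is the integral closure of the DVR $\overline{R}$ in the separable quadratic extension $Q(R)[\alpha]$, the fundamental identity gives $\sum_i e_i f_i = 2$, and the parity relation $2w(\alpha) = em$ with $m$ odd forces $e = 2$, hence a unique prime above $\overline{\mathfrak{m}}$ with $f = 1$; this delivers the DVR property, $v'_{|Q(R)} = 2v$, $v'(\alpha) = m$, and residual rationality (via $f=1$ and the sandwich $R \subseteq \mathcal{R} \subseteq \overline{\mathcal{R}}$) in one stroke. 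The paper instead works bottom-up with explicit elements: it sets $\beta = \alpha/x^{(m-1)/2}$ for a uniformizer $x \in \overline{R}$, proves $\overline{\mathcal{R}} = \overline{R} + \overline{R}\beta$ by bounding the coefficients $2p$ and $p^2 - q^2b$ of the minimal polynomial of $p+q\alpha$ (using the same even/odd value dichotomy you exploit for $v'(\mathcal{R})$), checks locality by exhibiting inverses of units, deduces $d = 2$ from $v'(Q(R)[\alpha]) = d\mathbb{Z} \cup (d\mathbb{Z}+v'(\alpha)) = \mathbb{Z}$, and obtains residual rationality from the explicit isomorphism $\overline{\mathcal{R}}/\overline{M} \simeq \overline{R}/\overline{\mathfrak{m}} \simeq \mathcal{R}/M$ with $M = \mathfrak{m}+I\alpha$. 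Your route is shorter and more conceptual, unifying the DVR property, the normalization of $v'$, and residual rationality under the single computation $e=2$, $f=1$; the paper's route buys concrete generators ($\overline{\mathcal{R}} = \overline{R}+\overline{R}\beta$, $\overline{M} = \overline{\mathfrak{m}}+\overline{R}\beta$) and stays at the level of its standing references. You also correctly replace two citations with direct verifications: irreducibility of $t^2-b$ over $Q(R)$ (hence $\mathcal{R}$ a domain, where the paper cites \cite{d2017new}) and the identification $\mathcal{R} = R + I\alpha$ (cited from \cite{barucci2015family}). The one step you leave implicit --- module-finiteness of $\overline{\mathcal{R}}$ over $\mathcal{R}$ --- does follow exactly as you indicate, since $\overline{\mathcal{R}}$ is finite over $\overline{R}$ by separability (here $\operatorname{char}(R) \neq 2$ is used, in both proofs) and $\overline{R}$ is finite over $R \subseteq \mathcal{R}$; the paper spells this chain out.
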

	\begin{proof}
		From \cite{barucci2015family} we know that if $R$ is Noetherian, one-dimensional and local so is $\mathcal{R}$. Moreover, since $v(b)$ is odd, the polynomial $t^2-b$ is irreducible in $Q(R)[t]$, then, from \cite[Corollary 1.3]{d2017new}, $\mathcal{R}$ is a domain. \\
		Now we prove that $\mathcal{R}$ is analytically irreducible. It is enough to prove that $\overline{\mathcal{R}}$ is local and a finitely generated $\mathcal{R}$-module. Let $x \in \overline{R}$ be an element of valuation $1$, $k =\frac{m-1}{2}$ and $\beta = \frac{\alpha}{x^k} \in Q(R)[\alpha]$; then $\beta^2 = \frac{b}{x^{m-1}} \in \overline{R}$ since $v(\frac{b}{x^{m-1}}) = 1 > 0$, so $\beta$ is integral over $\overline{R}$. We prove that $\overline{\mathcal{R}} = \overline{R}+\overline{R}\beta$. The inclusion $\overline{R}+\overline{R}\beta \subseteq \overline{\mathcal{R}}$ follows from the fact that $\overline{R} \subseteq \overline{\mathcal{R}}$ and that $\beta$ is integral over $\overline{R}$. Conversely, let $p+q\alpha \in \overline{\mathcal{R}} = \overline{R}^{Q(R)[\alpha]}$, where $p,q \in Q(R)$, $q \neq 0$. Then, since $Q(R) \subseteq Q(R)[\alpha]$ is algebraic, from \cite[Theorem 2.1.17]{huneke2006integral} the coefficients $2p$ and $p^2-q^2b$ of the minimal polynomial of $p+q\alpha$ over $Q(R)$ are in $\overline{R}$. In addition, $v(p^2) = 2v(p)$ is even and $v(q^2b) = 2v(q)+m$ is odd, then $v(p^2) \neq v(q^2b)$, therefore
		\begin{gather*}
			0 \leq v(p^2-q^2b) = \min\{ 2v(p),2v(q)+m \} \\
			\Rightarrow
			\begin{cases}
				v(p) \geq 0 \Rightarrow p \in \overline{R} \\
				2v(q)+m \geq 0 \Rightarrow v(q) \geq -k \Rightarrow qx^k \in \overline{R}.
			\end{cases}
		\end{gather*}
		Hence $p + q \alpha = p + qx^k \beta \in \overline{R} + \overline{R}\beta$. Now if we denote by $\overline{\mathfrak{m}}$ the maximal ideal of $\overline{R}$, the ring $\overline{R} + \overline{R}\beta$ is local with maximal ideal $\overline{M} = \overline{\mathfrak{m}} + \overline{R} \beta$; indeed the inverse of $p+q\beta \in \overline{R}+\overline{R}\beta$ with $p \in \overline{R} \setminus \overline{\mathfrak{m}}$, is $\frac{p-q\beta}{p^2-q^2\beta^2}$, in fact $p^2-q^2\beta^2$ is invertible since $0 = v(p^2) \neq v(q^2\beta^2) = 2v(q)+1$ and
		\[ v(p^2-q^2\beta^2) = \min\{v(p^2),v\left(q^2\beta^2\right)\} = v(p^2) = 0.  \]
		It follows that $\overline{\mathcal{R}} = \overline{R} + \overline{R}\beta$ is local.
		
		Now we prove that $\overline{\mathcal{R}}$ is a finitely generated $\mathcal{R}$-module. The field extension $Q(R) \subseteq Q(R)[\alpha]$ is finite and separable, then, by \cite[Theorem 3.1.3]{huneke2006integral}, the integral closure of $\overline{R}$ in $Q(R)[\alpha]$, which is equal to $\overline{R}^{Q(R)[\alpha]} = \overline{\mathcal{R}}$, is a finite module over $\overline{R}$. Now $\overline{R}$ and $\mathcal{R} \simeq R + I \alpha$ are finite modules over $R$, so $\overline{\mathcal{R}}$ is a finite module over $\mathcal{R}$.
		
		Now let $v'(Q(R)) = d\mathbb{Z}$ for some $d \in \mathbb{N}$. It results $v'(x) = d$ and $b \in (x^m) \setminus (x^{m+1})$, namely $b = ux^m$ with $u \in \overline{R}$ invertible. It follows that $v'(b) = mv'(x)$; in addition from $\alpha^2 = b$ we obtain $2v'(\alpha) = v'(b) = mv'(x) = md$. Since
		\[ \begin{split}
			v'(Q(R)[\alpha]) = v'(Q(R) + Q(R) \alpha) = v'(Q(R)) \cup [v'(Q(R)) + v'(\alpha)] = \\ = d\mathbb{Z} \cup (d\mathbb{Z}+v(\alpha)) = \mathbb{Z},
		\end{split} \]
		it must be $d=2$, so $v'_{|Q(R)} = 2v$ and also $v'(\alpha) = m$. It easily follows that
		\[ v'(\mathcal{R}) = v(R) \Join^{v(b)} v(I) = S \Join^m v(I). \]
		
		Finally we show that $\mathcal{R}$ is residually rational. Recall that $\overline{\mathfrak{m}}$ is the maximal ideal of $\overline{R}$ and $\overline{M} = \overline{\mathfrak{m}}+\overline{R}\beta$ is the maximal ideal of $\overline{\mathcal{R}}$. From \cite[Proposition 2.1]{barucci2015family} the maximal ideal of $\mathcal{R}$ is $M = \mathfrak{m}+I\alpha$. Thus
		\[ \overline{\mathcal{R}}/\overline{M} = \frac{\overline{R}+\overline{R}\beta}{\overline{\mathfrak{m}}+\overline{R}\beta} \simeq\overline{R}/\overline{\mathfrak{m}} \simeq R/\mathfrak{m} \simeq \frac{R+I\alpha}{\mathfrak{m}+I\alpha} = \mathcal{R}/M. \]
	\end{proof}
	
	In the following, $v'$ will denote the extension on $Q(R)[\alpha]$ of the valuation of $\overline{\mathcal{R}}$.
	
	\begin{proposition}\label{R is Arf}
		If $\mathcal{R}$ is Arf, so is $R$.
	\end{proposition}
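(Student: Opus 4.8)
The plan is to prove the statement through the ideal-theoretic definition of the Arf property rather than through values. By Theorem \ref{double valuation}, $\mathcal{R}$ is again a Noetherian, analytically irreducible, residually rational one-dimensional local domain, so ``$R$ is Arf'' is equivalent to ``every integrally closed ideal of $R$ is stable.'' I would therefore fix an arbitrary integrally closed ideal $J$ of $R$, choose $x\in J$ with $v(x)=\min v(J)=:s$, and aim for $J^2=xJ$ (the inclusion $xJ\subseteq J^2$ being automatic). It is worth stressing that merely invoking Corollary \ref{Quotient duplication Arf} to conclude that $v(R)$ is an Arf semigroup is \emph{not} enough: the Arf property of the ring also demands that the multiplicity sequence of $R$ agree with that of $v(R)$, and this is precisely why one must argue with ideals and not with values alone.

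The core of the argument is to lift $J$ to $\mathcal{R}$. Using $v'|_{Q(R)}=2v$ and $v'(\alpha)=m$ from Theorem \ref{double valuation}, and the fact that an element $p+q\alpha\in\mathcal{R}$ (with $p\in R$, $q\in I$) satisfies $v'(p+q\alpha)=\min\{2v(p),\,2v(q)+m\}$ because the two valuations have opposite parity, I would set
\[ \mathcal{J}=\{\,w\in\mathcal{R}: v'(w)\ge 2s\,\}. \]
Since $2s\in 2\cdot v(R)\subseteq v'(\mathcal{R})$ is attained by $x\in R\subseteq\mathcal{R}$, this is an integrally closed ideal of $\mathcal{R}$ whose minimum value is $2s$. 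Decomposing $Q(R)[\alpha]=Q(R)\oplus Q(R)\alpha$, one then checks the two ``parity'' identities
\[ \mathcal{J}\cap R=J \qquad\text{and}\qquad x\mathcal{J}\cap R=xJ, \]
the first because $J$ is integrally closed (so $\{p\in R:v(p)\ge s\}=J$), and the second because $xw\in R$ forces the $\alpha$-component of $w$ to vanish.

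With these in hand the conclusion is quick: as $\mathcal{R}$ is Arf and $\mathcal{J}$ is integrally closed, $\mathcal{J}$ is stable, and since $x$ realizes the minimum value of $\mathcal{J}$ we get $\mathcal{J}^2=x\mathcal{J}$. For any $i,j\in J\subseteq\mathcal{J}$ the product $ij$ lies in $\mathcal{J}^2=x\mathcal{J}$; but $ij\in R$, hence $ij\in x\mathcal{J}\cap R=xJ$. This gives $J^2\subseteq xJ$, so $J^2=xJ$, i.e. $J$ is stable, and therefore $R$ is Arf. I expect the main obstacle to be the second paragraph, namely confirming that $\mathcal{J}$ is a genuine integrally closed ideal with $2s$ as an attained minimum value and, above all, establishing the parity identities $\mathcal{J}\cap R=J$ and $x\mathcal{J}\cap R=xJ$ that let one extract the ``even part'' of the stability relation; once these are secured, reading off $ij\in xJ$ is immediate. (By Proposition \ref{stable in conductor} one could even restrict attention to ideals $J$ with $C\subsetneq J$, but the lifting argument is uniform and needs no such case split.)
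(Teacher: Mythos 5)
Your proof is correct and takes essentially the same approach as the paper: both lift $J$ to the very same integrally closed ideal $\tilde{J}=\{w\in\mathcal{R}:v'(w)\geq v'(x)\}$ of $\mathcal{R}$, invoke its stability because $\mathcal{R}$ is Arf, and descend by intersecting with the even part. The only cosmetic difference is in the descent: the paper phrases stability as $(\tilde{J}:\tilde{J})=x^{-1}\tilde{J}$ and uses $\tilde{J}\cap\overline{R}=J$, while you use $\tilde{J}^2=x\tilde{J}$ together with the parity identity $x\tilde{J}\cap R=xJ$ coming from $Q(R)[\alpha]=Q(R)\oplus Q(R)\alpha$ --- both descents are valid, and your side remark that the semigroup-level Corollary alone would not suffice (because of the multiplicity-sequence requirement) is exactly right.
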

	\begin{proof}
		Let $J$ be an integrally closed ideal of $R$ and let $x \in J$ such that $v(x) = \min v(J)$. Fix $\tilde{J} = \{ y \in \mathcal{R} : v'(y) \geq v'(x) \}$, $\tilde{J}$ is an integrally closed ideal of $\mathcal{R}$, so it is stable, namely $(\tilde{J}:\tilde{J}) = x^{-1}\tilde{J}$. Furthermore, since $\overline{R} \cap \mathcal{R} = R$, we have $J = \tilde{J} \cap R = \tilde{J} \cap \overline{R}$. It suffices to prove that $(J:J) = x^{-1}J$. It is clear that $(J:J) \subseteq x^{-1}J$, in fact, if $j \in (J:J)$, then by definition $xj \in J$, so $j \in x^{-1}J$. Conversely let $j \in x^{-1}J$ and $j' \in J$, we have
		\[ 
		\left. \begin{array}{r}
		j \in x^{-1}J \subseteq x^{-1}\tilde{J} = (\tilde{J}:\tilde{J}) \\
		j' \in J \subseteq \tilde{J}
		\end{array} \right\} \Rightarrow jj' \in \tilde{J}.
		\]
		Further
		\[ 
		\left. \begin{array}{r}
		j \in x^{-1}J \subseteq \overline{R} \\
		j' \in J \subseteq R \subseteq \overline{R}
		\end{array} \right\} \Rightarrow jj' \in \overline{R},
		\]
		it follows that $jj' \in \tilde{J} \cap \overline{R} = J$, so $j \in (J:J)$.
	\end{proof}
	
	Now our aim is to prove the extension of Theorem \ref{characterization on ns} to $\mathcal{R}$, that is $\mathcal{R}$ is Arf if and only if $R$ is Arf, $I$ is integrally closed and a similar condition on the multiplicity sequence. For the necessity, from Proposition \ref{R is Arf} $R$ is Arf, then the multiplicity sequences of $R$ and $v(R)$ coincides. Therefore we can directly apply Theorem \ref{characterization on ns} (see the proof of Theorem \ref{characterization on rings}). For the sufficiency, from Theorem \ref{characterization on ns} we have that $v(R) \Join^{v(b)} v(I)$ is an Arf numerical semigroup, but this is not enough to prove that $\mathcal{R}$ is Arf. In order to prove necessity, we need a series of technical results.
	
	For this purpose we introduce some more notation. We fix an integrally closed ideal $\tilde{J}$ of $\mathcal{R}$; set $J = \tilde{J} \cap R$ and $\tilde{j_1} = \min v'(\tilde{J})$, $j_1 = \min v'(J)$. We denote the conductor of $R$ with $C = (R:\overline{R})$, and the conductor of $\mathcal{R}$ with $C_{\mathcal{R}} = (\mathcal{R}:\overline{\mathcal{R}})$. Note that the inclusion $C_{\mathcal{R}} \cap R \subseteq C$ may be strict. In addition, we will suppose that $C_{\mathcal{R}} \subsetneq \tilde{J}$. \newpage
	
	\begin{proposition}\label{J int closed}
		The ideal $J$ is integrally closed in $R$. Further
		\begin{enumerate}
			\item If $x+y\alpha \in \tilde{J}$ then $x \in J$ and $y\alpha \in \tilde{J}$.
			\item If $\tilde{j_1}$ is even then $j_1 = \tilde{j_1}$ and there exists $x \in J$ such that $v'(x) = \tilde{j_1}$.
			\item $v'(J) = v'(\tilde{J}) \cap v'(\overline{R})$.
			\item If $\tilde{j_1} < \min v'(I \alpha)$ then $\tilde{j_1}$ is even.
		\end{enumerate}
	\end{proposition}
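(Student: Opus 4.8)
The unifying device throughout is the parity of $v'$-values together with the fact that an integrally closed ideal is pinned down by its minimal value: since $\tilde J$ is integrally closed, $\tilde J = \{ z \in \mathcal R : v'(z) \geq \tilde{j_1}\}$. The plan is to record this description and then exploit that every element of $\mathcal R \simeq R + I\alpha$ decomposes as $x + y\alpha$ with $x \in R$ and $y \in I$, where $v'(x) = 2v(x)$ is \emph{even} and $v'(y\alpha) = 2v(y) + m$ is \emph{odd}. Because the two summands can therefore never share a value, $v'(x + y\alpha) = \min\{v'(x), v'(y\alpha)\}$, and this single fact drives every part of the statement.

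For the integral closedness of $J$, I would simply intersect the description of $\tilde J$ with $R$: an element $r \in R$ lies in $J = \tilde J \cap R$ iff $v'(r) = 2v(r) \geq \tilde{j_1}$, i.e.\ $J = \{ r \in R : v(r) \geq \lceil \tilde{j_1}/2\rceil\}$. This is manifestly of the form $\{ r \in R : v(r) \geq c\}$, and since $\min v(J) \geq \lceil \tilde{j_1}/2\rceil$ the integral closure $\overline{J} = \{r \in R : v(r) \geq \min v(J)\}$ is contained in $J$; hence $J = \overline J$.

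Part (1) is the workhorse: if $x + y\alpha \in \tilde J$ then $\min\{v'(x), v'(y\alpha)\} = v'(x+y\alpha) \geq \tilde{j_1}$, so \emph{both} $v'(x) \geq \tilde{j_1}$ and $v'(y\alpha) \geq \tilde{j_1}$; as $x, y\alpha \in \mathcal R$ and $\tilde J$ is integrally closed, both lie in $\tilde J$, and $x \in \tilde J \cap R = J$. With (1) in hand, parts (2) and (4) are quick parity arguments. For (2), pick $z = x + y\alpha \in \tilde J$ with $v'(z) = \tilde{j_1}$; by (1) $x \in J$, and since $\tilde{j_1}$ is even while $v'(y\alpha)$ is odd, the minimum $\tilde{j_1}$ must be attained by $v'(x)$, so $v'(x) = \tilde{j_1}$; combining this with $v'(J) \subseteq v'(\tilde J)$ gives $j_1 = \tilde{j_1}$. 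For (4), if $\tilde{j_1}$ were odd then in such a $z$ the odd value $\tilde{j_1}$ could only be realized by $v'(y\alpha)$ (forcing $y \neq 0$), yet $v'(y\alpha) \geq \min v'(I\alpha) > \tilde{j_1}$, a contradiction.

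The \emph{main obstacle} is the nontrivial inclusion in (3). The inclusion $v'(J) \subseteq v'(\tilde J) \cap v'(\overline R)$ is clear from $J \subseteq \tilde J$ and $J \subseteq R \subseteq \overline R$. For the reverse, take $s \in v'(\tilde J) \cap v'(\overline R)$; since $\overline R$ is a DVR with $v'(\overline R) = 2\mathbb N$, the value $s$ is even, and since $s \in v'(\tilde J) \subseteq v'(\mathcal R) = v(R) \Join^{v(b)} v(I)$ by Theorem \ref{double valuation}, an even element of the duplication must lie in $2 \cdot v(R)$. Thus $s = 2\sigma$ with $\sigma \in v(R)$, so $s = v'(r)$ for some $r \in R$; as $v'(r) = s \geq \tilde{j_1}$ and $\tilde J$ is integrally closed, $r \in \tilde J \cap R = J$, giving $s \in v'(J)$. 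The delicate point is exactly this passage from a value realized merely in $\overline R$ to one realized by an element of $R$, which is where the semigroup-theoretic identification of $v'(\mathcal R)$ as a numerical duplication is indispensable.
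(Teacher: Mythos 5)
Your proof is correct and follows essentially the same strategy as the paper's: both arguments rest on the parity split ($v'(x)=2v(x)$ even, $v'(y\alpha)=2v(y)+m$ odd, hence $v'(x+y\alpha)=\min\{v'(x),v'(y\alpha)\}$) combined with the description of an integrally closed ideal by its minimal value. The only harmless deviation is in part (3), where you realize the even value by an element of $R$ via the identification $v'(\mathcal{R})=v(R)\Join^{v(b)}v(I)$ from Theorem \ref{double valuation}, whereas the paper instead decomposes a witness $z=x+y\alpha\in\tilde{J}$ and applies part (1) to get $v'(z)=v'(x)\in v'(J)$; both routes are valid.
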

	\begin{proof}
		We have
		\[ \tilde{J} = \{ x \in \mathcal{R} : v'(x) \geq \tilde{j}_1 \}, \quad J = \tilde{J} \cap R = \{ x \in R: v'(x) \geq \tilde{j}_1 \}, \]
		then $j_1 \geq \tilde{j_1}$; since for any $x,y \in R$ $v(x) \geq v(y)$ if and only if $v'(x) \geq v'(y)$, $\overline{J} \subseteq J$, i.e. $J$ is integrally closed.
		
		Let $z = x+y\alpha \in \tilde{J}$, with $x \in R$ and $y \in I$. Now $v'(x) = 2v(x)$ is even and $v'(y\alpha) = 2v(y)+m$ is odd, therefore $v'(x) \neq v'(y\alpha)$ and it results
		\[ v'(x) \geq \min\{v'(x),v'(y\alpha)\} = v'(z) \geq \tilde{j_1}. \]
		It follows that $x \in \tilde{J} \cap R = J$ and consequently $y\alpha = z-x \in \tilde{J}$.
		
		Now, if $\tilde{j_1}$ is even, let $z = x+y\alpha \in \tilde{J}$ such that $v'(z) = \tilde{j_1}$. From the previous observations it must be $\tilde{j_1} = v'(z) = v'(x) \geq j_1 \geq \tilde{j_1}$, hence $v'(x) = j_1 = \tilde{j_1}$ with $x \in J$.
		
		Since $J \subseteq \tilde{J}$ and $J \subseteq \overline{R}$, we have $v'(J) \subseteq v'(\tilde{J})$ and $v'(J) \subseteq v'(\overline{R})$, so $v'(J) \subseteq v'(\tilde{J}) \cap v'(\overline{R})$. Conversely let $z = x+y\alpha \in \mathcal{R}$ such that $v'(z) \in v'(\tilde{J}) \cap v'(\overline{R})$; $\tilde{J}$ integrally closed implies $z \in \tilde{J}$, for what we have proved so far $x \in J$. Since $v'(z) \in v'(\overline{R})$, it is even, thus $v'(z) = v'(x) \in v'(J)$.
		
		Now if $\tilde{j_1} < \min v'(I \alpha)$, assume by contradiction that $\tilde{j_1}$ is odd. Let $z = x+y\alpha \in \tilde{J}$ such that $v'(z) = \tilde{j_1}$, then $\tilde{j_1} = v'(z) = v'(y\alpha) \in v'(I\alpha)$, contradicting $\tilde{j_1} < \min v'(I\alpha)$.
	\end{proof}
	
	\begin{lemma}\label{I c C}
		If $I \subseteq C$ then $I \alpha \subsetneq C_{\mathcal{R}}$ and $\tilde{j_1}$ is even.
	\end{lemma}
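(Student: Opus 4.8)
The plan is to reduce both assertions to the single inclusion $I\alpha \subseteq C_{\mathcal R}$. Once this is proved, the evenness of $\tilde{j_1}$ is almost immediate: under the standing hypothesis $C_{\mathcal R} \subsetneq \tilde J$, with both $C_{\mathcal R}$ and $\tilde J$ integrally closed ideals of $\mathcal R$, the proper containment forces $\tilde{j_1} = \min v'(\tilde J) < \min v'(C_{\mathcal R})$. On the other hand $I\alpha \subseteq C_{\mathcal R}$ gives $\min v'(I\alpha) \geq \min v'(C_{\mathcal R})$, so $\tilde{j_1} < \min v'(I\alpha)$, and part (4) of Proposition~\ref{J int closed} then yields that $\tilde{j_1}$ is even. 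So the entire lemma rests on proving $I\alpha \subseteq C_{\mathcal R}$, and that the inclusion is proper.

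To prove $I\alpha \subseteq C_{\mathcal R} = (\mathcal R : \overline{\mathcal R})$, I would compute $I\alpha \cdot \overline{\mathcal R}$ directly, using the description $\overline{\mathcal R} = \overline R + \overline R\beta$ with $\beta = \alpha/x^k$, $v(x)=1$, $k = \tfrac{m-1}{2}$ from the proof of Theorem~\ref{double valuation}, together with $\alpha^2 = b$ and $v(b) = m$. Expanding,
\[ I\alpha\cdot\overline{\mathcal R} = I\alpha\,\overline R + I\alpha\beta\,\overline R = (I\overline R)\alpha + I\,b\,x^{-k}\,\overline R. \]
For the first summand I would invoke that $I$ is integrally closed (this is the running hypothesis for the sufficiency direction) and contained in $C$: since $\min v(I) \geq c$, the integrally closed ideal $I$ equals $\{z \in \overline R : v(z) \geq \min v(I)\}$, hence is already an ideal of $\overline R$, so $I\overline R = I$ and $(I\overline R)\alpha = I\alpha \subseteq \mathcal R$. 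For the second summand a valuation estimate suffices: every element of $I\,b\,x^{-k}\,\overline R$ has value at least $\min v(I) + m - k = \min v(I) + \tfrac{m+1}{2} \geq c + \tfrac{m+1}{2} > c$, so it lies in $C \subseteq R \subseteq \mathcal R$. Combining the two gives $I\alpha\cdot\overline{\mathcal R} \subseteq \mathcal R$, i.e. $I\alpha \subseteq C_{\mathcal R}$.

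The inclusion is proper because $C_{\mathcal R}$ is the nonzero integrally closed conductor (it is nonzero since $\overline{\mathcal R}$ is a finite $\mathcal R$-module, as shown in Theorem~\ref{double valuation}), hence contains every element of $\mathcal R$ of sufficiently large value, in particular some nonzero $r \in R$. Such an $r$ has trivial $\alpha$-component and therefore cannot belong to $I\alpha$, whose elements are all pure multiples of $\alpha$. This yields $I\alpha \subsetneq C_{\mathcal R}$ and, together with the first paragraph, completes the argument.

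I expect the main obstacle to be the first summand in the expansion of $I\alpha\cdot\overline{\mathcal R}$: forcing the $\alpha$-coefficient to stay inside $I$ rather than merely inside $\overline I$ or $C$ is exactly where the hypotheses that $I$ is integrally closed and $I \subseteq C$ become indispensable. Indeed, taking $I = (t^3,t^5) \subseteq C$ in $k[[t^3,t^4,t^5]]$ shows that $t^3\alpha \notin C_{\mathcal R}$ (one has $(t^3\alpha)\,t = t^4\alpha \notin \mathcal R$ since $t^4 \notin I$), so the inclusion genuinely fails when $I$ is not integrally closed. The $R$-part, by contrast, is only a routine valuation bookkeeping step.
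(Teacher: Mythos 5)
Your proof is correct, but it takes a genuinely different route from the paper's. The paper argues entirely at the semigroup level: since $v'(\mathcal{R}) = v(R) \Join^{v(b)} v(I)$ (Theorem \ref{double valuation}), it invokes the conductor formula for the numerical duplication from \cite[Proposition 2.1]{d2013numerical} to get $\min v'(C_{\mathcal{R}}) = 2\min v(I) + v(b) - 1 = \min v'(I\alpha) - 1$, and then both the inclusion and its strictness fall out at once because $C_{\mathcal{R}}$ is integrally closed; the evenness step is the same as yours ($\tilde{j_1} < \min v'(C_{\mathcal{R}}) < \min v'(I\alpha)$ followed by Proposition \ref{J int closed}(4)). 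You instead verify conductor membership by hand, multiplying $I\alpha$ against the explicit decomposition $\overline{\mathcal{R}} = \overline{R} + \overline{R}\beta$ from the proof of Theorem \ref{double valuation}; your expansion and valuation estimates are all correct ($I\overline{R} = I$ because an integrally closed $I \subseteq C$ is a value-threshold set in $\overline{R}$, and $Ibx^{-k}\overline{R} \subseteq C$ since $m - k = \tfrac{m+1}{2} > 0$). What your approach buys is self-containedness — no appeal to the duplication conductor formula — at the cost of the exact value of $\min v'(C_{\mathcal{R}})$, which is why your properness argument (a nonzero $r \in R \cap C_{\mathcal{R}}$ that cannot be a pure $\alpha$-multiple) is slightly more roundabout than the paper's off-by-one comparison of minima, though perfectly valid. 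One observation of yours deserves emphasis: your counterexample $I = (t^3,t^5)$ in $k[[t^3,t^4,t^5]]$ shows the lemma is false as literally stated, since the statement omits the hypothesis that $I$ is integrally closed; but the paper's own proof silently needs it as well, because the formula $\min v'(C_{\mathcal{R}}) = 2\min v(I)+v(b)-1$ presupposes $F(v(I)) = \min v(I) - 1$, i.e.\ $v(I) = \min v(I) + \mathbb{N}$, which for $I \subseteq C$ is equivalent to $I = \overline{I}$. Since the lemma is only ever applied when $I$ is integrally closed (Lemmas \ref{J c I} and \ref{J minus C}, and the sufficiency half of Theorem \ref{characterization on rings}), your explicitly flagged extra hypothesis matches the paper's actual use rather than weakening the result.
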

	\begin{proof}
		Recalling that $v'(\mathcal{R}) = v(R) \Join^{v(b)}v(I)$, from \cite[Proposition 2.1]{d2013numerical} we obtain
		\begin{gather*}
			\min v'(C_{\mathcal{R}}) = 2\min v(I) + v(b)-1 = \min v'(I) + v'(\alpha) - 1 = \\
			= \min v'(I \alpha) -1 < \min v'(I \alpha).
		\end{gather*}
		Hence $\min v'(C_{\mathcal{R}}) < \min v'(I \alpha)$, so $I\alpha \subsetneq C_{\mathcal{R}}$ ($C_{\mathcal{R}}$ is integrally closed). Now, since $C_{\mathcal{R}} \subsetneq \tilde{J}$ and both $C_{\mathcal{R}}$ and $\tilde{J}$ are integrally closed, we have $\tilde{j_1} < \min v'(C_{\mathcal{R}}) < \min v'(I\alpha)$, from Proposition \ref{J int closed}, $\tilde{j_1}$ is even.
	\end{proof}
	
	\begin{lemma}\label{I c J}
		If $I \subseteq J$ then $I\alpha \subsetneq \tilde{J}$ and $\tilde{j_1}$ is even. Further, if $R$ is Arf and $I$ is integrally closed, then $JI\alpha \subseteq x \tilde{J}$, where $x \in J$ is such that $v'(x) = j_1 = \tilde{j_1}$.
	\end{lemma}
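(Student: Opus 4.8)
The plan is to treat the two assertions separately, reducing each to a comparison of valuations or to a containment of ideals inside $R$. For the first assertion I would compare $\tilde{j_1}=\min v'(\tilde J)$ with $\min v'(I\alpha)$ and then invoke Proposition~\ref{J int closed}(4); for the second I would establish the $R$-level containment $JI\subseteq xI$ and lift it by $\alpha$.

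First I would establish $I\alpha\subsetneq\tilde J$. Since $I\subseteq J=\tilde J\cap R$ we have $v(I)\subseteq v(J)$, hence $\min v(I)\ge \min v(J)=j_1/2$ (recall $v'_{|R}=2v$, so $j_1=\min v'(J)=2\min v(J)$). Therefore
\[ \min v'(I\alpha)=2\min v(I)+m\ge j_1+m>j_1\ge \tilde{j_1}, \]
the last inequality holding because $J\subseteq\tilde J$. Consequently every $y\alpha$ with $y\in I$ lies in $\mathcal R$ and has value strictly larger than $\tilde{j_1}$, so it belongs to the integrally closed ideal $\tilde J$; this yields $I\alpha\subseteq\tilde J$, and the inclusion is strict because $\tilde J$ contains an element of value $\tilde{j_1}<\min v'(I\alpha)$. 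Finally, the inequality $\tilde{j_1}<\min v'(I\alpha)$ together with Proposition~\ref{J int closed}(4) shows that $\tilde{j_1}$ is even.

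For the second assertion, since $\tilde{j_1}$ is even, Proposition~\ref{J int closed}(2) supplies $j_1=\tilde{j_1}$ and an element $x\in J$ with $v'(x)=\tilde{j_1}$; note $v(x)=\tilde{j_1}/2=\min v(J)$, so $x$ has minimal value in $J$. I claim $JI\subseteq xI$, which suffices since then $JI\alpha\subseteq xI\alpha\subseteq x\tilde J$ by the inclusion $I\alpha\subseteq\tilde J$ already proved. To prove the claim, fix $j\in J$ and $y\in I$. As $R$ is Arf and $J$ is integrally closed (Proposition~\ref{J int closed}), $J$ is stable, so $(J:J)=x^{-1}J$; thus $j/x\in(J:J)$ and, because $y\in I\subseteq J$, the product $(j/x)y$ lies in $J\subseteq R$. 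Moreover $v((j/x)y)=(v(j)-v(x))+v(y)\ge v(y)\ge \min v(I)$, and since $I$ is integrally closed, i.e. $I=\{r\in R: v(r)\ge\min v(I)\}$, this forces $(j/x)y\in I$, that is $jy\in xI$.

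The main obstacle I anticipate is the claim $JI\subseteq xI$: stability of $J$ (coming from $R$ being Arf) only places $(j/x)y$ in $J$, and one must then use the valuative description of the integrally closed ideal $I$, together with the value estimate above, to sharpen membership from $J$ down to $I$. Both hypotheses---$R$ Arf and $I$ integrally closed---are used precisely at this point, while the remaining steps are routine bookkeeping with the parities of $v'$ established in Proposition~\ref{J int closed}.
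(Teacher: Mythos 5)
Your proposal is correct and follows essentially the same route as the paper: both parts rest on the chain $\min v'(I\alpha)>\min v'(I)\geq j_1\geq \tilde{j_1}$ plus Proposition~\ref{J int closed}(4) for the first assertion, and on stability of $J$ (you use $(J:J)=x^{-1}J$, the paper the equivalent $J^2=xJ$) followed by the valuative characterization of the integrally closed ideal $I$ to show $JI\subseteq xI$, then multiplying by $\alpha$ and using $I\alpha\subseteq\tilde{J}$. Your value estimate forcing $(j/x)y\in I$ is exactly the paper's computation $v'(j')=v'(i)+v'(j)-v'(x)\geq v'(i)$, so the two proofs differ only in bookkeeping.
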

	\begin{proof}
		For every $i \in I$ it follows that $v'(i\alpha) = v'(i) + v'(\alpha) > v'(i)$, then $\min v'(I \alpha) > \min v'(I)$. Moreover $I \subseteq J \subseteq \tilde{J}$, hence
		\[ \min v'(I \alpha) > \min v'(I) \geq \min v'(J) \geq \tilde{j_1}. \]
		It follows that $I \alpha \subsetneq \tilde{J}$ ($\tilde{J}$ is integrally closed), and from Proposition \ref{J int closed} $\tilde{j_1}$ is even.
		
		Now suppose that $R$ is Arf and $I$ is integrally closed. The choice of $x \in J$ is allowed by Proposition \ref{J int closed}. Let $i \in I$ and $j \in J$, then $ij \in IJ \subseteq J^2 = xJ$ ($R$ is Arf, $J$ is integrally closed and $x$ is of minimum value in $J$), so there exists $j' \in J$ such that $ij = xj'$, furthermore
		\[ v'(j') = v'(ijx^{-1}) = v'(i) + v'(j)-v'(x) \geq v'(i), \]
		hence $j' \in I$ ($I$ is integrally closed). Finally $ji\alpha = xj'\alpha \in xI\alpha \subseteq x \tilde{J}$, therefore $JI\alpha \subseteq x\tilde{J}$.
	\end{proof}
	
	\begin{remark}
		Recall that $\min v(C)$ is the conductor of $v(R)$. Note that if $I$ is integrally closed, the condition $\min(v(I)) < \min v(C) = c(v(R))$ (similar to the one of Theorem \ref{characterization on ns}) is equivalent to $C \subsetneq I$.
	\end{remark}
	
	Recall that, if $R$ is Arf, then the multiplicity sequences of $R$ and of $v(R)$ coincides. In this case, we will denote the multiplicity sequence of $R$ (and of $v(R)$) with $(e_0,e_1,\ldots,e_{n-1},1,1,\ldots)$, $e_{n-1} \neq 1$.
	
	\begin{lemma}\label{J c I}
		If $J \subseteq I$, then $J\alpha \subseteq \tilde{J}$. Further, if $R$ is Arf, $I$ is integrally closed and $v(b) = e_0 = e_1 = \ldots = e_{n-1}$, then $J^2 \subseteq x\tilde{J}$ and $J^2\alpha \subseteq x\tilde{J}$, where $x \in \tilde{J}$ is an element of minimum value in $\tilde{J}$.
	\end{lemma}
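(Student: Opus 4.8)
The plan is to prove the two inclusions separately and, for the second, to distinguish the cases according to the parity of $\tilde{j_1}=\min v'(\tilde{J})$. For the first inclusion let $j\in J\subseteq I$; then $j\alpha\in I\alpha\subseteq\mathcal{R}$, and since
\[ v'(j\alpha)=v'(j)+v'(\alpha)=v'(j)+m\geq j_1+m>\tilde{j_1}, \]
the integral closedness of $\tilde{J}$ forces $j\alpha\in\tilde{J}$. Thus $J\alpha\subseteq\tilde{J}$, using only $J\subseteq I$.

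For the remaining statements I assume $R$ Arf, $I$ integrally closed and $m=v(b)=e_0=\cdots=e_{n-1}$, so that $v(R)=m\mathbb{N}\cup(nm+\mathbb{N})$ and the conductor is $C=\{y\in\overline{R}:v(y)\geq nm\}$. I shall repeatedly use the Arf property in the following form: if $a,a',c\in R$ satisfy $v(a),v(a')\geq v(c)$, then $\overline{(c)}$ is integrally closed and hence stable, so $aa'\in\overline{(c)}^{\,2}=c\,\overline{(c)}$ and therefore $aa'/c\in\overline{(c)}\subseteq R$. If $\tilde{j_1}$ is even, Proposition \ref{J int closed} lets me take $x\in J$ with $v'(x)=\tilde{j_1}=j_1$, i.e.\ $x$ of minimum value in $J$; then $J^2=xJ\subseteq x\tilde{J}$ because $R$ is Arf and $J$ is integrally closed, while $J^2\alpha=xJ\alpha\subseteq x\tilde{J}$ by the first inclusion. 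This settles the even case.

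The essential case is $\tilde{j_1}$ odd. Here I choose the minimum-value element in the form $x=q\alpha$ with $q\in I$ and $2v(q)+m=\tilde{j_1}$; putting $\rho=v(q)$ and $K=\overline{(q)}=\{r\in R:v(r)\geq\rho\}$, Proposition \ref{J int closed} yields the decomposition $\tilde{J}=J+K\alpha$ with $J\subseteq K$. Since
\[ x\tilde{J}=q\alpha\,(J+K\alpha)=qKb+qJ\alpha, \]
separating the even part $qKb$ from the odd part $qJ\alpha$ turns the two required inclusions into $J^2\subseteq qKb$ and $J^2\subseteq qJ$. The inclusion $J^2\subseteq qJ$ follows from the Arf fact: writing $J^2=x_0J$ with $x_0\in J$ of minimum value $\gamma=\min v(J)$, for $j\in J$ one has $x_0j/q\in R$ (as $v(x_0),v(j)\geq\gamma>\rho$) of value $\geq 2\gamma-\rho\geq\gamma$, hence $x_0j/q\in J$ and $x_0J\subseteq qJ$.

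The inclusion $J^2\subseteq qKb$ is the main obstacle; it amounts to showing $jj'/(qb)\in R$ for all $j,j'\in J$. The difficulty is that membership in the principal ideal $(qb)$ is not detected by values alone: one only knows $v(j)\geq\gamma\geq\rho+(m+1)/2$, which in general falls short of $v(qb)=\rho+m$, so the Arf fact cannot be applied with $c=qb$ directly. I would overcome this with the explicit shape of $v(R)$, splitting on the position of $\rho=v(q)$. If $\rho=\ell m<nm$, then $v(R)$ has no value strictly between $\ell m$ and $(\ell+1)m$, so the least value $\geq\rho+(m+1)/2$ is $(\ell+1)m$; hence $\gamma=\rho+m=v(qb)$, and now $v(j),v(j')\geq v(qb)$, so the Arf fact with $c=qb$ gives $jj'/(qb)\in\overline{(qb)}\subseteq R$. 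If instead $\rho\geq nm$, then $\gamma=\rho+(m+1)/2$ lies in the conductor range and
\[ v\!\left(\frac{jj'}{qb}\right)\geq 2\gamma-\rho-m=\rho+1>nm, \]
so $jj'/(qb)\in\overline{R}$ has value $\geq nm$ and therefore lies in $C\subseteq R$. In either case the value also satisfies $v(jj'/(qb))\geq\rho$, so $jj'/(qb)\in K$ and $J^2\subseteq qKb$, which completes the lemma.
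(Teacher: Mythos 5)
Your proof is correct, and while the first inclusion and the even-$\tilde{j_1}$ case coincide with the paper's argument, in the essential case $\tilde{j_1}$ odd you take a genuinely different route. The paper works inside $\mathcal{R}$: using the standing assumption $C_{\mathcal{R}} \subsetneq \tilde{J}$, it notes $\tilde{j_1} = v'(y\alpha)$ lies below the conductor of $v'(\mathcal{R})$, so by Remark \ref{remark multiples of m} it equals $kv(b)$ with $k$ odd, whence $j_1 = (k+1)v(b) = v'(yb)$; thus $yb = x\alpha$ is an element of minimum value in $J$, and stability of $J$ finishes in two lines: $J^2 = x\alpha J \subseteq x\tilde{J}$ and $J^2\alpha = xbJ \subseteq xJ \subseteq x\tilde{J}$. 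You instead never return to the duplication: you decompose $\tilde{J} = J + K\alpha$ with $K = \overline{(q)}$ (note this needs $K \subseteq I$, which holds because $q \in I = \overline{I}$ and is what makes $K\alpha \subseteq \mathcal{R}$ legitimate --- worth stating explicitly), reduce both inclusions to statements inside $R$, and verify them from the explicit shape $v(R) = m\mathbb{N} \cup (nm+\mathbb{N})$. Your case $\rho = \ell m < nm$ is exactly the paper's mechanism viewed from $R$ rather than from $\mathcal{R}$: the gap of length $m$ above $\rho$ forces $\gamma \geq \rho + m = v(qb)$, so $qb$ plays the role of the paper's minimal element $yb$ of $J$. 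Your case $\rho \geq nm$ is in fact vacuous under the paper's standing hypothesis: $C_{\mathcal{R}} \subsetneq \tilde{J}$ gives $2\rho + m = \tilde{j_1} < \min v'(C_{\mathcal{R}}) = (2n+1)m - 1$ (the conductor computed in the sufficiency part of Theorem \ref{characterization on ns}), forcing $\rho < nm$; by covering it anyway you obtain a proof of the lemma that does not use the standing assumption at all, a small gain in generality, at the cost of some extra bookkeeping compared with the paper's two-line conclusion. Two cosmetic slips, neither harmful: in case (a) you assert $\gamma = \rho + m$ and in case (b) $\gamma = \rho + (m+1)/2$, while your arguments establish (and only need) the inequalities $\gamma \geq \rho + m$ and $\gamma \geq \rho + (m+1)/2$.
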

	\begin{proof}
		If $j \in J$ then $j\alpha \in J\alpha \subseteq I\alpha \subseteq \mathcal{R}$, moreover $v'(j\alpha) \geq v'(j) \geq \tilde{j_1}$. It follows that $j\alpha \in \tilde{J}$, therefore $J\alpha \subseteq \tilde{J}$.
		
		Now suppose that $R$ is Arf, $I$ is integrally closed and $v(b) = e_0 = e_1 = \ldots = e_{n-1}$. If $\tilde{j_1}$ is even, from Proposition \ref{J int closed} we can choose $x \in J$; it follows that $J^2 = xJ \subseteq x\tilde{J}$ and $J^2\alpha = xJ\alpha \subseteq x \tilde{J}$.
		
		On the other hand, if $\tilde{j_1}$ is odd we can choose $y \in I$ such that $x = y\alpha \in \tilde{J}$. In this case it must be $C \subsetneq I$, otherwise if $I \subseteq C$ ($I$ is integrally closed), then from Lemma \ref{I c C} $\tilde{j_1}$ can not be odd. Therefore, since $C_{\mathcal{R}} \subsetneq \tilde{J}$ the value of $y\alpha$ is less than $\min v'(C_{\mathcal{R}})$, which is the conductor of $v'(\mathcal{R})$, from Remark \ref{remark multiples of m} we have $v'(y\alpha) = kv(b)$ for some $k \in \mathbb{N}$ odd. From Proposition \ref{J int closed} it follows that the minimum of $v'(J)$ is equal to $(k+1)v(b)$, hence
		\[ (k+1)v(b) = kv(b) + v(b) = v'(y\alpha) + v'(\alpha) = v'(yb) = j_1. \]
		Since $yb \in R$, $yb = x\alpha$ is an element of minimum value in $J$, therefore $J^2 = x\alpha J \subseteq x \tilde{J}$ and $J^2 \alpha = x\alpha^2J = xbJ \subseteq xJ \subseteq x\tilde{J}$.
	\end{proof}
	
	\begin{lemma}\label{J minus C}
		Suppose that $R$ is Arf, $I$ is integrally closed and $v(b) = e_0 = e_1 = \ldots = e_{n-1}$, and let $i \in I$. If $i\alpha \in \tilde{J} \setminus C_{\mathcal{R}}$ with $v'(i\alpha) > \tilde{j_1}$, then $i \in J$.
	\end{lemma}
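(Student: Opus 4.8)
The plan is to reduce the statement to a purely numerical computation inside $D = v'(\mathcal{R}) = v(R) \Join^{v(b)} v(I)$. Since $i \in I \subseteq R \subseteq \mathcal{R}$, we have $i \in R$, so proving $i \in J = \tilde{J}\cap R$ amounts to proving $i \in \tilde{J}$, that is $v'(i) \geq \tilde{j_1}$. The relevant values are $v'(i) = 2v(i)$ (even) and $v'(i\alpha) = v'(i)+m$ (odd). The hypothesis only gives $v'(i\alpha) > \tilde{j_1}$, hence $v'(i\alpha) \geq \tilde{j_1}+1$, whereas what we must establish, $v'(i) \geq \tilde{j_1}$, is equivalent to the far stronger $v'(i\alpha) \geq \tilde{j_1}+m$. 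Closing this gap is the heart of the argument.

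First I would dispose of a degenerate case using the dichotomy for integrally closed ideals. As $I$ is integrally closed, either $I \subseteq C$ or $C \subsetneq I$. If $I \subseteq C$, then Lemma \ref{I c C} gives $I\alpha \subsetneq C_{\mathcal{R}}$, so $i\alpha \in I\alpha \subseteq C_{\mathcal{R}}$, contradicting $i\alpha \in \tilde{J}\setminus C_{\mathcal{R}}$; hence this case cannot occur and the implication holds vacuously. Therefore I may assume $C \subsetneq I$, which for integrally closed $I$ is equivalent to $\min v(I) < c(v(R))$, placing us precisely in the case $\min(E) < c(S)$ of Theorem \ref{characterization on ns} with $E = v(I)$.

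In this case the hypotheses ($R$ Arf, $I$ integrally closed, $v(b) = e_0 = \cdots = e_{n-1}$) guarantee, via Theorem \ref{characterization on ns}, that $D$ is an Arf numerical semigroup, so Remark \ref{remark multiples of m} applies: every element of $D$ strictly below its conductor $\min v'(C_{\mathcal{R}})$ is a multiple of $m = v(b)$. Now $i\alpha \in \mathcal{R}\setminus C_{\mathcal{R}}$ forces $v'(i\alpha) < \min v'(C_{\mathcal{R}})$, and $C_{\mathcal{R}} \subsetneq \tilde{J}$ (both integrally closed) forces $\tilde{j_1} = \min v'(\tilde{J}) < \min v'(C_{\mathcal{R}})$; since both values lie in $D$ below the conductor, I can write $v'(i\alpha) = km$ and $\tilde{j_1} = \ell m$ with $k,\ell \in \mathbb{N}$. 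The strict inequality $v'(i\alpha) > \tilde{j_1}$ then reads $km > \ell m$, i.e.\ $k > \ell$, whence $k-1 \geq \ell$ and
\[ v'(i) = v'(i\alpha) - m = (k-1)m \geq \ell m = \tilde{j_1}. \]
This gives $i \in \tilde{J}$, and since $i \in R$ we conclude $i \in J$.

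The main obstacle is exactly the value gap between the odd $v'(i\alpha)$ and the even $v'(i)$: a naive use of $v'(i\alpha) > \tilde{j_1}$ only yields $v'(i) \geq \tilde{j_1}+1-m$, which is useless once $m > 1$. The structural input that rescues the argument is that, under the standing hypotheses and in the surviving case $C \subsetneq I$, the part of $D$ below its conductor is an arithmetic progression of step $m$ (Remark \ref{remark multiples of m}); this forces both relevant values to be multiples of $m$ and upgrades $k > \ell$ to $k-1 \geq \ell$. Preliminarily ruling out $I \subseteq C$ is essential, since there the ``multiples of $m$'' description need not hold --- but, reassuringly, that is also exactly the case in which the hypothesis $i\alpha \notin C_{\mathcal{R}}$ becomes unsatisfiable.
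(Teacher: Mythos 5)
Your proof is correct and takes essentially the same route as the paper's: rule out $I \subseteq C$ via Lemma \ref{I c C}, then use Remark \ref{remark multiples of m} (through Theorems \ref{characterization on ns} and \ref{double valuation}) to write both $v'(i\alpha)$ and $\tilde{j_1}$ as multiples of $v(b)$ below the conductor of $v'(\mathcal{R})$, closing the parity gap between $v'(i\alpha)$ and $v'(i)$. The only difference is cosmetic: you make explicit why $\tilde{j_1}$ is itself a multiple of $m$ (from $C_{\mathcal{R}} \subsetneq \tilde{J}$ and both ideals being integrally closed), a step the paper's assertion that $(k-1)v(b) \in v'(\tilde{J})$ leaves implicit.
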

	\begin{proof}
		Note that we can assume $C \subsetneq I$, otherwise if $I \subseteq C$ from Lemma \ref{I c C} $I \alpha \subseteq C_{\mathcal{R}}$, so $I \alpha \cap (\tilde{J} \setminus C_{\mathcal{R}}) = \emptyset$.
		
		In view of Theorem \ref{characterization on ns} and Theorem \ref{double valuation}, since $i\alpha \notin C_{\mathcal{R}}$ we have $v'(i\alpha) = kv(b)$ for some $k \in \mathbb{N}$ odd. Moreover, $v'(i\alpha) > \tilde{j_1}$, so $(k-1)v(b) \in v'(\tilde{J})$, therefore
		\[ v'(i) = v'(i\alpha) - v'(\alpha) = kv(b) - v(b) = (k-1)v(b) \in v'(\tilde{J}). \]
		Hence $v'(i) \geq \tilde{j_1}$ with $i \in I \subseteq R$, so $i \in \tilde{J} \cap R = J$.
	\end{proof}
	
	Now we are ready to prove Theorem \ref{characterization on rings}, so we no longer hold the assumptions made on $\tilde{J}$. 

	\begin{theorem}\label{characterization on rings}
		$\mathcal{R}$ is Arf if and only if $R$ is Arf with multiplicity sequence $(e_0,e_1,\ldots,e_{n-1},1,1,\ldots)$, $e_{n-1} \neq 1$, $I$ is integrally closed and if $C \subsetneq I$ then $v(b) = e_0 = e_1 = \ldots = e_{n-1}$.
	\end{theorem}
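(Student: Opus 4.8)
The plan is to obtain necessity directly from the semigroup characterization and to prove sufficiency by checking that every integrally closed ideal of $\mathcal{R}$ is stable. For necessity, assume $\mathcal{R}$ is Arf. By Proposition \ref{R is Arf} the ring $R$ is Arf, so the multiplicity sequences of $R$ and of $v(R)$ coincide; and, being Arf, $\mathcal{R}$ has Arf value semigroup $v'(\mathcal{R}) = v(R)\Join^{v(b)}v(I)$ by Theorem \ref{double valuation}. I would then apply Theorem \ref{characterization on ns} with $S=v(R)$, $E=v(I)$, $m=v(b)$ to get that $v(R)$ is Arf with multiplicity sequence $(e_0,\ldots,e_{n-1},1,1,\ldots)$, $e_{n-1}\neq 1$, that $v(I)$ is integrally closed, and that $e_0=\cdots=e_{n-1}=v(b)$ whenever $\min v(I)<c(v(R))$. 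Finally I would translate via the dictionaries of Section \ref{section 1}: $I$ is integrally closed if and only if $v(I)$ is, and by the Remark preceding Lemma \ref{J c I} the condition $C\subsetneq I$ is equivalent to $\min v(I)<c(v(R))$.

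For sufficiency, assume the listed conditions and let $\tilde J$ be an arbitrary integrally closed ideal of $\mathcal{R}$; I must show $\tilde J^2=x\tilde J$ for some $x\in\tilde J$ of minimal value, and since $x\tilde J\subseteq\tilde J^2$ is automatic only $\tilde J^2\subseteq x\tilde J$ is at stake. Integrally closed ideals are comparable with the conductor, so either $\tilde J\subseteq C_{\mathcal{R}}$, where the analogue of Proposition \ref{stable in conductor} for $\mathcal{R}$ gives stability immediately, or $C_{\mathcal{R}}\subsetneq\tilde J$, which is exactly the standing hypothesis under which Proposition \ref{J int closed} and Lemmas \ref{I c C}--\ref{J minus C} were proved. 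In this remaining case $J=\tilde J\cap R$ is an integrally closed ideal of $R$, and grading by the parity of the valuation writes $\tilde J$ as an even part $J$ together with an odd part $\{k\alpha:k\in K\}$, where $K\subseteq I$ consists of those $k\in I$ with $k\alpha\in\tilde J$; consequently $\tilde J^2=(J^2+K^2b)+JK\alpha$.

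Thus $\tilde J^2\subseteq x\tilde J$ reduces to controlling $J^2$, $K^2b\subseteq I^2b$ and $JK\alpha\subseteq JI\alpha$, and since $I$ and $J$ are comparable I would split accordingly. If $I\subseteq J$, Lemma \ref{I c J} furnishes $\tilde{j_1}$ even, an $x\in J$ of minimal value, and the inclusion $JI\alpha\subseteq x\tilde J$; then $J^2=xJ$ by the Arf property of $R$, and $I^2b\subseteq J^2b=xJb\subseteq xJ\subseteq x\tilde J$ since $bJ\subseteq J$. If $J\subsetneq I$, I would use Lemma \ref{J c I} to get $J^2,J^2\alpha\subseteq x\tilde J$ (choosing $x$ of minimal value, which for $\tilde{j_1}$ odd has the form $y\alpha$ with $yb\in J$), and Lemma \ref{J minus C} to show that every odd element $k\alpha\in\tilde J$ lying outside $C_{\mathcal{R}}$ and strictly above the minimum already has $k\in J$, so that the mixed and odd products reduce to the $J$-pieces; odd elements inside $C_{\mathcal{R}}$ are absorbed via the analogue of Proposition \ref{JC c xJ}, namely $\tilde J C_{\mathcal{R}}\subseteq x\tilde J$, and the subsidiary possibility $I\subseteq C$ is covered by Lemma \ref{I c C}.

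The main obstacle is the sub-case $J\subsetneq I$ with $\tilde{j_1}$ odd: there the minimal-value element of $\tilde J$ is not in $R$, the clean identity $J^2=xJ$ is unavailable, and the argument must be routed through $\alpha$ (replacing $x=y\alpha$ by $yb\in J$) while simultaneously separating the odd elements of $\tilde J$ according to whether they lie inside $C_{\mathcal{R}}$ or satisfy the hypothesis of Lemma \ref{J minus C}. Assembling the three products into $x\tilde J$ uniformly across the parity of $\tilde{j_1}$ and the position of $I$ relative to $C$ is the heart of the proof; the rest is valuation bookkeeping supplied by Theorem \ref{double valuation} and the residual rationality of $\mathcal{R}$.
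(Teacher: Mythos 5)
Your proposal reproduces the paper's proof essentially step for step: necessity via Proposition \ref{R is Arf}, Theorem \ref{double valuation} and Theorem \ref{characterization on ns}; sufficiency via the conductor dichotomy (Proposition \ref{stable in conductor}), the parity decomposition of $\tilde{J}$ coming from Proposition \ref{J int closed}, the comparability case split handled by Lemmas \ref{I c C}, \ref{I c J}, \ref{J c I}, \ref{J minus C} together with Proposition \ref{JC c xJ}, and residual rationality to deal with two odd elements of minimal value $\tilde{j_1}$. The one difference---you split first on $I$ versus $J$ and fold the $I \subseteq C$ possibility into conductor absorption, whereas the paper splits first on $I$ versus $C$---is a cosmetic reordering of the same case tree, and it is sound because Lemma \ref{I c J} does not require the hypothesis $v(b)=e_0=\cdots=e_{n-1}$, while your $J \subsetneq I$ branch invokes Lemmas \ref{J c I} and \ref{J minus C} only after excluding $I \subseteq C$ via Lemma \ref{I c C}.
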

	\begin{proof}
		\emph{Necessity}. From Proposition \ref{R is Arf} $R$ is Arf, therefore the multiplicity sequences of $R$ and $v(R)$ coincides. Since $\mathcal{R}$ is Arf, $v(R) \Join^{v(b)}\nolinebreak v(I)$ is an Arf numerical semigroup, so from Theorem \ref{characterization on ns} $v(I)$ is integrally closed, equivalently $I$ is integrally closed. Furthermore, if $C \subsetneq I$, equivalently if $\min v(I) < c(v(R))$, then $v(b) = e_0 = e_1 = \ldots = e_{n-1}$. \\
		\emph{Sufficiency}. Let $\tilde{J}$ be an integrally closed ideal of $\mathcal{R}$. From Proposition \ref{stable in conductor} applied to $\tilde{J}$ and $\mathcal{R}$, if $\tilde{J} \subseteq C_{\mathcal{R}}$ then $\tilde{J}$ is stable, so suppose that $C_{\mathcal{R}} \subsetneq \tilde{J}$. We denote with $J = \tilde{J} \cap R$, from Proposition \ref{J int closed} $J$ is an integrally closed ideal of $R$, so it is stable. Let $x \in \tilde{J}$ be an element of minimum value, we want to prove that $x\tilde{J} = \tilde{J}^2$, the inclusion $x\tilde{J} \subseteq \tilde{J}^2$ is clear, so it is suffice to prove that $\tilde{J}^2 \subseteq x\tilde{J}$. Now the two ideals $I$ and $C$ of $R$ are both integrally closed, so one is contained in the other.
		
		If $I \subseteq C$ from Proposition \ref{J int closed} and Lemma \ref{I c C} we can choose $x \in J$ and it results $\tilde{J} \subseteq J + I \alpha \subseteq J + C_{\mathcal{R}}$. In the view of Proposition \ref{JC c xJ} applied to $\tilde{J}$ and $\mathcal{R}$ we obtain
		\[ \tilde{J}^2 \subseteq (J+C_{\mathcal{R}})^2 = J^2 + JC_{\mathcal{R}} + C_{\mathcal{R}}^2 \subseteq xJ + \tilde{J}C_{\mathcal{R}} + \tilde{J}C_{\mathcal{R}} \subseteq x\tilde{J}. \]
		
		If $C \subsetneq I$ in this case we have $v(b) = e_0 = e_1 = \ldots = e_{n-1}$. Again $I$ and $J$ are two integrally closed ideal of $R$ and we distinguish two cases.
		
		If $I \subseteq J$ then from Proposition \ref{J int closed} and Lemma \ref{I c J} we can choose $x \in J$ and we have $JI\alpha \subseteq x \tilde{J}$. It follows
		\[ \tilde{J}^2 \subseteq (J+I\alpha)^2 = J^2 + JI\alpha + I^2b \subseteq J^2 + x\tilde{J} + J^2 = xJ + x\tilde{J} \subseteq x\tilde{J}. \]
		If $J \subseteq I$ then let $x_1+y_1\alpha, x_2+y_2\alpha \in \tilde{J}$. For $k=1,2$ we distinguish three cases.
		\begin{enumerate}
		\item $v'(y_k \alpha) = \tilde{j_1}$. In this case set $y_k \alpha = x$.
		\item $y_k \alpha \in \tilde{J} \setminus C_{\mathcal{R}}$ with $v'(y_k \alpha) > \tilde{j_1}$. In this case, from Lemma \ref{J minus C}, $y_k \in J$.
		\item $y_k \alpha \in C_{\mathcal{R}}$.
		\end{enumerate}
		In view of Lemma \ref{J c I} and Proposition \ref{JC c xJ} we have to verify six different cases.
		\begin{itemize}
			\item[$(1,1)$]
			Set $y_1 \alpha = x$. Since $v'(y_1 \alpha) = v'(y_2 \alpha) = \tilde{j_1}$ and $\mathcal{R}$ is residually rational, there exists a unit $u \in \mathcal{R}$ such that $v'((y_2-uy_1)\alpha) > \tilde{j_1}$, thus for the element $(y_2-uy_1)\alpha$ we can proceed as in the cases $2$ or $3$:
			\begin{gather*}
			(x_1 +y_1 \alpha)(x_2 +y_2 \alpha) = x_1x_2 + x_1(y_2-uy_1)\alpha + y_1 \alpha(ux_1+x_2+y_2\alpha) \in \\
			\begin{split}
				\in J^2 + (y_2-uy_1) \alpha J + x \tilde{J} & \subseteq x J + (y_2-uy_1) \alpha J + x\tilde{J} \subseteq \\
				& \subseteq (y_2-uy_1) \alpha J + x\tilde{J}.
			\end{split}
			\end{gather*}
			Now if $(y_2-uy_1) \alpha \in \tilde{J} \setminus C_{\mathcal{R}}$, then from Lemma \ref{J minus C} $(y_2-uy_1) \in J$ so $(y_2-uy_1) \alpha J \subseteq \alpha J^2 \subseteq x \tilde{J}$. Otherwise if $(y_2-uy_1) \alpha \in C_{\mathcal{R}}$, then $(y_2-uy_1) \alpha J \subseteq C_{\mathcal{R}} \tilde{J} \subseteq x \tilde{J}$.
			
			\item[$(1,2)$] \begin{gather*}
			(x_1 +y_1 \alpha)(x_2 +y_2 \alpha) = x_1(x_2 +y_2 \alpha) + y_1 \alpha (x_2 +y_2 \alpha) \in \\
			\in J(J+J\alpha) + x \tilde{J} \subseteq J^2 + J^2 \alpha + x\tilde{J} \subseteq x\tilde{J}.
			\end{gather*}
			
			\item[$(1,3)$] \begin{gather*}
			(x_1 +y_1 \alpha)(x_2 +y_2 \alpha) = x_1(x_2 +y_2 \alpha) + y_1 \alpha (x_2 +y_2 \alpha) \in \\
			J(J+C_{\mathcal{R}}) + x\tilde{J} = J^2 + JC_{\mathcal{R}} + x\tilde{J} \subseteq x J + \tilde{J}C_{\mathcal{R}} + x\tilde{J} \subseteq x \tilde{J}.
			\end{gather*}
			
			\item[$(2,2)$] \begin{gather*}
			(x_1 +y_1 \alpha)(x_2 +y_2 \alpha) \in (J+J\alpha)^2 = J^2 + J^2\alpha + J^2b \subseteq \\
			\subseteq x\tilde{J} + x\tilde{J} + J^2\subseteq x\tilde{J}.
			\end{gather*}
			
			\item[$(2,3)$] \begin{gather*}
			(x_1 +y_1 \alpha)(x_2 +y_2 \alpha) \in (J+J\alpha)(J+C_{\mathcal{R}}) = \\
			= J^2 + JC_{\mathcal{R}} + J^2\alpha + J\alpha C_{\mathcal{R}} \subseteq x\tilde{J} + \tilde{J}C_{\mathcal{R}} + x\tilde{J} + \tilde{J}C_{\mathcal{R}} \subseteq x \tilde{J}.
			\end{gather*}
			
			\item[$(3,3)$] \begin{gather*}
			(x_1 +y_1 \alpha)(x_2 +y_2 \alpha) \in (J+C_{\mathcal{R}})^2 = J^2 + J C_{\mathcal{R}} + C_{\mathcal{R}}^2 \subseteq \\
			\subseteq x\tilde{J} + \tilde{J} C_{\mathcal{R}} + \tilde{J} C_{\mathcal{R}} \subseteq x\tilde{J}.
			\end{gather*}
		\end{itemize}
		This proves that $\tilde{J}^2 \subseteq x\tilde{J}$.
	\end{proof}
	\noindent
	\textbf{Acknowledgments}. I would like to thank Marco D'Anna for his constant help during the drafting of the paper and for his useful suggestions.
	
	\bibliographystyle{abbrv}
	\bibliography{Reference}

\begin{thebibliography}{10}

\bibitem{arf1948interpretation}
C.~Arf.
\newblock Une interpretation alg{\'e}brique de la suite des ordres de
  multiplicit{\'e} d'une branche alg{\'e}brique.
\newblock {\em Proceedings of the London Mathematical Society}, 2(1):256--287,
  1948.

\bibitem{barucci2015family}
V.~Barucci, M.~D'Anna, and F.~Strazzanti.
\newblock A family of quotients of the {R}ees algebra.
\newblock {\em Communications in Algebra}, 43(1):130--142, 2015.

\bibitem{barucci1997maximality}
V.~Barucci, D.~E. Dobbs, and M.~Fontana.
\newblock {\em Maximality properties in numerical semigroups and applications
  to one-dimensional analytically irreducible local domains}, volume 598.
\newblock American Mathematical Soc., 1997.

\bibitem{d2007amalgamated}
M.~D'Anna and M.~Fontana.
\newblock An amalgamated duplication of a ring along an ideal: the basic
  properties.
\newblock {\em Journal of Algebra and its Applications}, 6(03):443--459, 2007.

\bibitem{d2013numerical}
M.~D'Anna and F.~Strazzanti.
\newblock The numerical duplication of a numerical semigroup.
\newblock {\em Semigroup forum}, 87(1):149--160, 2013.

\bibitem{d2017new}
M.~D'Anna and F.~Strazzanti.
\newblock New algebraic properties of quadratic quotients of the {R}ees
  algebra.
\newblock {\em Journal of Algebra and Its Applications}, page 1950047, 2017.

\bibitem{delgadonumericalsgps}
M.~Delgado, P.~A. Garc{\'\i}a-S{\'a}nchez, and J.~Morais.
\newblock Numericalsgps. a package for numerical semigroups, version 1.0.1.
\newblock 2015.
\newblock \url{http://www.gap-system.org/Packages/numericalsgps.html}.

\bibitem{garcia2017parametrizing}
P.~A. Garc{\'i}a-S{\'a}nchez, B.~A. Heredia, H.~Karaka{\c{s}}, and J.~C.
  Rosales.
\newblock Parametrizing {A}rf numerical semigroups.
\newblock {\em Journal of Algebra and Its Applications}, 16(11):1750209, 2017.

\bibitem{huneke2006integral}
C.~Huneke and I.~Swanson.
\newblock {\em Integral closure of ideals, rings, and modules}, volume~13.
\newblock Cambridge University Press, 2006.

\bibitem{lipman1971stable}
J.~Lipman.
\newblock \emph{Stable ideals and Arf rings}.
\newblock {\em \emph{American Journal of Mathematics}}, 93(3):649--685, 1971.

\bibitem{gap2015gap}
\relax The GAP~Group.
\newblock Gap--groups, algorithms, and programming, version 4.7.9;.
\newblock 2015.
\newblock \url{http://www.gap-system.org}.

\bibitem{rosales2004arf}
J.~C. Rosales, P.~A. Garc{\'i}a-S{\'a}nchez, J.~I. Garc{\'i}a-Garc{\'i}a, and
  M.~B. Branco.
\newblock Arf numerical semigroups.
\newblock {\em Journal of Algebra}, 276(1):3--12, 2004.

\end{thebibliography}
	
\end{document}